\theoremstyle{definition} 
\newtheorem{definition}{Definition}[section] 
\theoremstyle{plain} 
\newtheorem{theorem}[definition]{Theorem}
\newtheorem{lemma}[definition]{Lemma}
\newtheorem{corollary}[definition]{Corollary} 
\theoremstyle{remark}
\newtheorem{remark}[definition]{Remark}
\newcommand{\R}{\mathbb{R}} 
\newcommand{\C}{\mathbb{C}} 
\newcommand{\Z}{\mathbb{Z}} 
\newcommand{\N}{\mathbb{N}} 
\newcommand{\ie}{{\it{i.e.}, }} 
\newcommand{\vareps}{\varepsilon} 
\newcommand{\del}{\partial} 
\newcommand{\n}{\mathbf{n}} 
\newcommand{\tn}{\textnormal} 
\renewcommand{\vec}[1]{\hat{\boldsymbol{#1}}} 
\newcommand{\abs}[1]{\lvert{#1}\rvert} 
\newcommand{\mat}[6]{\begin{pmatrix} #1 & #2 & #3 \\ #4 & #5 & #6 \end{pmatrix}} 
\newcommand{\cc}{\overline} 
\newcommand{\trace}[1]{\operatorname{tr}\left(#1\right)} 
\newcommand{\diff}{\mathrm{diff}}
\begin{document} 
\title{Steklov Eigenvalues of Nearly Hyperspherical Domains} 
\author{Chee Han Tan}
\address{Department of Mathematics, Wake Forest University, Winston-Salem, NC} 
\email{tanch@wfu.edu} 

\author{Robert Viator} 
\address{Department of Mathematics, Denison University, Granville, OH} 
\email{viatorr@denison.edu} 

\subjclass[2010]{35C20, 35P05, 41A58} 
\keywords{Steklov eigenvalues, perturbation theory, hyperspherical harmonics, isoperimetric inequality} 

\date{\today} 

\begin{abstract} 
We consider Steklov eigenvalues of nearly hyperspherical domains in $\R^{d + 1}$ with $d\ge 3$. In previous work, treating such domains as perturbations of the ball, we proved that the Steklov eigenvalues are analytic functions of the domain perturbation parameter. Here, we compute the first-order term of the asymptotic expansion and show that the first-order perturbations are eigenvalues of a Hermitian matrix, whose entries can be written explicitly in terms of the Pochhammer's and Wigner $3j$-symbols. We analyse the asymptotic expansion and show the following isoperimetric results among domains with fixed volume: (1) for an infinite subset of Steklov eigenvalues, the ball is not optimal, and (2) for a different infinite subset of Steklov eigenvalues, the ball is a stationary point. 
\end{abstract} 

\maketitle


\section{Introduction} 
Let $\Omega\subset\R^{d + 1}$ be a bounded domain with $d\ge 1$. The Steklov eigenvalue problem for $(\lambda, u)$ on $\Omega$ is given by 
\begin{subequations} 
\begin{alignat}{2} 
\label{eq:Steklov1} \Delta u & = 0 && \ \ \tn{ in } \Omega, \\ 
\label{eq:Steklov2} \del_\n u & = \lambda u && \ \ \tn{ on } \del\Omega, 
\end{alignat} 
\end{subequations} 
where $\Delta$ is the Laplacian acting on $H^1(\Omega)$, $\del_\n u = \nabla u\cdot \n$ is the unit outward normal derivative on the boundary $\del\Omega$, and $\lambda$ is the spectral parameter. It is well-known that the Steklov spectrum is discrete as long as the trace operator $T\colon H^1(\Omega)\to L^2(\del\Omega)$ is compact \cite{girouard2017}. Moreover, the eigenvalues are real and we enumerate them, counting multiplicity, in increasing order 
\begin{equation*} 
0 < \lambda_0(\Omega) < \lambda_1(\Omega)\le \lambda_2(\Omega)\le \dots \nearrow\infty. 
\end{equation*} 
The Steklov eigenvalue problem has received considerable attention in the literature; see the survey papers \cite{girouard2017, colbois2023} and references therein. 

The Steklov eigenvalue problem was first introduced by Vladimir Steklov in \cite{stekloff1902} to describe the stationary heat distribution in a body $\Omega$ whose heat flux through the boundary is proportional to the temperature. For planar domains, the Steklov eigenvalues are the squares of the natural frequencies of a vibrating free membrane with all its mass concentrated along the boundary \cite[p. 95]{Bandle}. Steklov eigenvalues also have applications in optimal material design for both electromagnetism and torsional rigidity \cite{lipton1998a, lipton1998b}. Recently, Cakoni et al. \cite{cakoni2016} used Steklov eigenvalues in nondestructive testing, where they established a crucial relationship between small changes in the (possibly complex valued) refractive index of a scattering object and the corresponding change in the eigenvalue of a modified Steklov problem. For this problem, numerical results in \cite{cakoni2016} revealed that a localised defect of the refractive index in a disc perturbs only a small number of modified Steklov eigenvalues. 

Isoperimetric inequalities for Steklov eigenvalues have been explored since the mid-twentieth century. The first major result was obtained by Weinstock in his 1954 seminal paper \cite{weinstock1954}, where he showed that the disc uniquely maximises the first nontrivial perimeter-normalised Steklov eigenvalue $\lambda_1(\Omega)\abs{\del\Omega}$ among all bounded simply connected planar domains with smooth boundary. For higher eigenvalues, Girouard and Polterovich showed that the $n$th perimeter-normalised Steklov eigenvalue is maximised in the limit by a sequence of simply connected planar domains degenerating to the disjoint union of $n$ identical discs. At the same time, it is known that Weinstock's result fails for non simply-connected planar domains \cite[Example 4.2.5]{girouard2017}. In dimension 3 or higher, Fraser and Schoen \cite{fraser2019} showed that Weinstock's result fails for general contractible domains, but Bucur et al. \cite[Theorem 3.1]{bucur2021} showed that Weinstock's result holds for all bounded convex domains with Lipschitz boundary. 

While it is natural to consider the maximisation of Steklov eigenvalues with prescribed perimeter (because the spectral parameter $\lambda$ appears on the boundary $\del\Omega$), in this paper we focus on the maximisation of Steklov eigenvalues with prescribed volume. For $\Omega\subset\R^{d + 1}$, let $\Lambda(\Omega)\coloneqq \lambda(\Omega)\cdot\abs{\Omega}^{\frac{1}{d + 1}}$ denote the volume-normalised Steklov eigenvalue. Brock proved that the ball uniquely maximises $\Lambda_1(\Omega)$ for bounded Lipschitz domains $\Omega\subset\R^{d + 1}$ in all dimensions. For higher eigenvalues, Bogosel, Bucur, and Giacomini \cite{bogosel2017} obtained the existence and regularity results for the shape optimiser for $\Lambda_n(\Omega)$, $n\ge 2$, on bounded Lipschitz domains. In dimension 2, numerical results from \cite{bogosel2016, bogosel2017, akhmetgaliyev2017} suggested that the optimal domain is unique (up to dilations and rigid  transformations), has $n$-fold symmetry, and has at least one axis of symmetry. In particular, the ball is not a maximiser for an infinite subset of Steklov eigenvalues for planar domains; this was confirmed for reflection-symmetric domains in \cite{viator2018}. 

Motivated by the asymptotic work of Lord Rayleigh \cite{Rayleigh} and Wolf and Keller \cite{wolf1994} on the minimisation of Laplace-Dirichlet eigenvalues on planar domains, Viator and Osting adopted  their perturbative approach to study Steklov eigenvalues on reflection-symmetric nearly circular planar domains \cite{viator2018} and nearly spherical domains \cite{viator2022}. In dimension 3, Viator and Osting \cite[Theorem 1.1]{viator2022} proved that for $n = 1, 2, \dots$, $\Lambda_{(n + 1)^2 - 1}$ is not maximised by the ball but $\Lambda_{n^2}$ is stationary for a ball, suggesting that the ball is a natural candidate for maximiser of $\Lambda_{n^2}$. However, recent numerical results from Antunes \cite{antunes2021} suggest that the ball maximises $\Lambda_4$ but not $\Lambda_9$ and $\Lambda_{16}$. The same numerical results also suggested that the optimal domain for $\Lambda_n$ seems to have $n$ "buds" and some of the optimal domains seem to have symmetries that can be related with Platonic solids. 

Tuning of mixed Steklov-Neumann boundary conditions have also been recently studied by Ammari, Imeri, and Nigam \cite{ammari2020}, where an algorithm was designed to generate the proper mixed boundary conditions necessary to obtain desired resonance effects. Besides shape optimisation and isoperimetric results, there have been numerous recent results connecting Steklov eigenvalues to free-boundary minimal surfaces, inverse problems, and more; see \cite{colbois2023} for an extensive, though not exhaustive, review of recent work in Steklov eigenvalues.

\subsection{Nearly hyperspherical domains} 
Given $d\ge 3$, we consider the Steklov eigenvalue problem on a \emph{nearly hyperspherical domain} $\Omega_\vareps\subset\R^{d + 1}$ in hyperspherical coordinates $(r, \hat\theta)$, where $\Omega_\vareps$ has the form 
\begin{equation} \label{eq:Domain} 
\Omega_\vareps = \left\{(r, \hat\theta): 0\le r\le 1+ \vareps\rho(\hat\theta), \, \hat\theta \in S^d \right\}, \ \ \rho(\hat\theta) = \sum_{p = 0}^\infty\sum_{q = 1}^{N(d, p)} A_{p, q} Y_{p, q}(\hat\theta). 
\end{equation} 
Here, $\vareps\ge 0$ is a small perturbation parameter, $\rho\in C^1(S^d)$ is a perturbation function which we expand in the basis of real hyperspherical harmonics (see Section \ref{sec:Harmonics}), and $S^d\subset\R^{d + 1}$ is the $d$-dimensional unit sphere. For $\vareps = 0$, $\Omega_0$ is the $(d + 1)$-dimensional unit ball $B\subset\R^{d + 1}$ and the eigenvalues are nonnegative integers $\lambda_{\ell, m} = \ell$, with multiplicity $N(d, \ell)$ given by 
\begin{equation} \label{eq:Steklov_multiplicity} 
N(d, 0) = 1 \ \ \tn{ and } \ \ N(d, \ell) = \binom{d + \ell}{d} - \binom{d + \ell - 2}{d}, \ \ \ell\ge 1. 
\end{equation} 
The corresponding eigenfunctions in hyperspherical coordinates $(r, \hat\theta)$ are given by 
\begin{equation*} 
u_{\ell, m}(r, \hat\theta) = r^\ell Y_\ell^m(\hat\theta), \ \ \ell\in\N = \{0, 1, 2, \dots\}, \ \ 1\le m\le N(d, \ell), 
\end{equation*} 
where $Y_\ell^m(\hat\theta)$ is a \emph{complex hyperspherical harmonics} of degree $\ell$ on $S^d$.  

Viator and Osting proved that the Steklov eigenvalues $\lambda^\vareps$ of a nearly circular ($d = 1$) and nearly spherical ($d = 2$) domains are analytic with respect to $\vareps$ \cite{viator2020}. This analyticity result was recently extended to nearly-hyperspherical domains \cite{tan2023}. The proof relies on the fact that the Steklov eigenvalues can be interpreted as the eigenvalues of the Dirichlet-to-Neumann map $G_{\rho, \vareps}\colon H^{1/2}(\del\Omega_\vareps)\to H^{-1/2}(\del\Omega_\vareps)$.

\subsection{Main results} 
In previous work \cite{viator2018, viator2022}, Viator and Osting used perturbation methods to study the asymptotic expansion of Steklov eigenvalues $\lambda^\vareps$ for reflection symmetric, nearly circular domains and nearly spherical domains. Moreover, these asymptotic results were used to establish local versions of the isoperimetric inequalities for certain Steklov eigenvalues. In this paper we extend their results to nearly hyperspherical domains.   

Given $d\ge 3$, we recall the volume-normalised Steklov eigenvalue $\Lambda(\Omega)\coloneqq \lambda(\Omega)\cdot\abs{\Omega}^{\frac{1}{d + 1}}$. For $k\in\Z^+$, define the index $N_{d, k} = \sum_{\ell = 1}^{k - 1} N(d, \ell)$. 

\begin{theorem} \label{thm:Iso_stat} 
Let $d\ge 3$ and $k\in\Z^+$. Then $\Lambda_{1 + N_{d, k}}$ is stationary in the sense that, for every perturbation function $\rho\in C^2(S^d)$, the map $\vareps\mapsto \Lambda_{1 + N_{d, k}}(\Omega_\vareps)$ is nonincreasing in $\abs{\vareps}$ for $\abs{\vareps}$ sufficiently small. 
\end{theorem}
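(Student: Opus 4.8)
The plan is to analyze the degenerate Steklov eigenvalue $\lambda = k$ of the ball, which has multiplicity $N(d,k)$ with eigenspace spanned by $\{r^k Y_k^m\}_{m=1}^{N(d,k)}$, by first-order perturbation theory, and then to show that the $N(d,k)$ branches emanating from $k$ are, after volume normalization, controlled to first order by a Hermitian \emph{traceless} matrix. Concretely, I would take as given the first-order expansion already established (the first-order corrections to the raw eigenvalues $\lambda^\vareps$ are the eigenvalues $\mu_1 \le \dots \le \mu_{N(d,k)}$ of a Hermitian matrix $M$ with entries written via Pochhammer's and Wigner $3j$-symbols), and then pass to the volume-normalized quantity $\Lambda = \lambda\,\abs{\Omega}^{1/(d+1)}$. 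Expanding $\abs{\Omega_\vareps} = \tfrac{1}{d+1}\int_{S^d}(1 + \vareps\rho)^{d+1}\,d\hat\theta$ shows that $\abs{\Omega_\vareps}^{1/(d+1)}$ depends, to first order, only on $\int_{S^d}\rho$, i.e. only on the constant ($\ell = 0$) component of $\rho$. Combining the two expansions, the first-order corrections to $\Lambda$ are the eigenvalues of an effective Hermitian matrix $\widetilde M = M + (\text{scalar})\,I$, up to an overall positive constant.

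The crux is the claim $\trace{\widetilde M} = 0$, which I would establish in two moves. First, by the addition theorem for hyperspherical harmonics the rotation-invariant sum $\sum_{m=1}^{N(d,k)}\abs{Y_k^m(\hat\theta)}^2$ (and likewise $\sum_m\abs{\nabla_{S^d}Y_k^m}^2$) is constant on $S^d$; consequently $\trace{M} = \sum_m M_{mm}$ sees $\rho$ only through its constant component, and the same holds for the volume contribution, so $\trace{\widetilde M}$ depends on $\rho$ only through $A_{0,1}$. Second, when $\rho$ is constant the domain $\Omega_\vareps$ is exactly a dilated ball, under which the scale-invariant quantity $\Lambda$ is unchanged; hence $\widetilde M = 0$, and in particular $\trace{\widetilde M} = 0$, in that case. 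Since the trace depends only on $A_{0,1}$ and vanishes on the pure-$A_{0,1}$ perturbations, it vanishes for every $\rho$.

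With $\widetilde M$ Hermitian and traceless, its smallest eigenvalue $\nu_{\min}$ satisfies $\nu_{\min} \le 0$ and its largest satisfies $\nu_{\max} \ge 0$. Following the \emph{ordered} branch of index $1 + N_{d,k}$ — the lowest eigenvalue of the cluster — its one-sided first-order slope is $\nu_{\min}$ as $\vareps \to 0^+$ and $\nu_{\max}$ as $\vareps \to 0^-$; in terms of $\abs{\vareps}$ both slopes are nonpositive, so $\vareps \mapsto \Lambda_{1 + N_{d,k}}(\Omega_\vareps)$ is nonincreasing for $\abs{\vareps}$ small. This is also consistent with the $d = 2$ fact that $\Lambda_{n^2}$ is stationary, since there $1 + N_{2,k} = k^2$.

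The main obstacle is twofold. The heavier bookkeeping is in the setup: verifying the precise form and Hermiticity of $M$ through the $3j$-symbol and Pochhammer algebra, and confirming the selection rules ($p$ even, $0 \le p \le 2k$) so that the trace computation via the addition theorem is clean. The more delicate structural point is the degenerate case $\widetilde M = 0$ — for instance when $\rho$ is supported on harmonics of odd degree or of degree exceeding $2k$ — where the first-order term vanishes identically; there one must descend to the second-order expansion and argue that the lowest branch still has nonpositive second-order correction in order to certify ``nonincreasing'' rather than merely ``stationary to first order.''
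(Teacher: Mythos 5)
Your proposal is correct, and it shares the paper's overall skeleton: combine the first-order result (eigenvalues of the Hermitian matrix $M^{(d,k)}$) with the volume expansion, reduce to an effective Hermitian matrix $\widetilde M = M^{(d,k)} + \tfrac{kA_{0,1}}{\abs{S^d}^{1/2}}I$, and conclude from tracelessness that the lowest branch of the cluster has nonpositive one-sided first-order slopes in $\abs{\vareps}$. Where you genuinely depart from the paper is the proof that $\trace{\widetilde M} = 0$. The paper does this by explicit computation (Lemmas \ref{thm:Oeps_matrix1}--\ref{thm:Matrix_trace} and Corollary \ref{thm:Matrix_sum}): integrate by parts, apply the addition theorem and its gradient version (Theorem \ref{thm:SH_addition_grad}), and then evaluate Gegenbauer polynomials at $z=1$ via Pochhammer/Gamma identities to show a certain bracket equals $-k$, yielding $\trace{M^{(d,k)}} = -\tfrac{kN(d,k)}{\abs{S^d}}\int_{S^d}\rho\,d\sigma_d$. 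You instead argue (i) that constancy of $\sum_m\abs{Y_k^m}^2$ and $\sum_m\abs{\nabla_{S^d}Y_k^m}^2$ forces $\trace{\widetilde M}$ to be a fixed constant times $A_{0,1}$, and (ii) that for constant $\rho$ the domain is a dilated ball, on which $\Lambda$ is exactly scale-invariant, so $\widetilde M = 0$ there; linearity then kills the constant for all $\rho$. This is slicker: it avoids the Gegenbauer/Gamma algebra entirely, needs only the \emph{constancy} asserted in Theorem \ref{thm:SH_addition_grad} (never its explicit value), and recovers the paper's trace formula as a byproduct. What the paper's computation buys is the explicit constants and the explicit splitting $M^{(d,k)} = -\tfrac{kA_{0,1}}{\abs{S^d}^{1/2}}I + E^{(d,k)}$, which are reused in the proofs of Theorem \ref{thm:Iso_notball} and Theorem \ref{thm:Ypq}. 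Two further remarks: your handling of both signs of $\vareps$ (slope $\nu_{\min}\le 0$ for $\vareps>0$, slope $-\nu_{\max}\le 0$ for $\vareps<0$) is in fact more explicit than the paper's proof, which records only $e_{k,1}\le 0$; and the degenerate case you flag ($\widetilde M = 0$, requiring second-order analysis to certify literal monotonicity) is not resolved by the paper either --- its proof also stops at the first-order inequality, so the theorem is in substance a first-order stationarity statement, and you need not descend to second order to match the paper's argument.
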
 

Theorem \ref{thm:Iso_stat} suggests that the ball is a natural candidate for maximiser of $\Lambda_{1 + N_{d, k}}$ in dimensions $d + 1\ge 4$. However, the recent numerical result from Antunes for $d + 1 = 4$ suggested that $\Lambda_{1 + N_{3, 1}} = \Lambda_5$ is maximised by the ball. On the other hand, we show that the $(d + 1)$-dimensional ball doesn't maximise another infinite subset of Steklov eigenvalues. 

\begin{theorem} \label{thm:Iso_notball} 
Let $d\ge 3$ and $k\in\Z^+$. Then $\Lambda_{N_{d, k + 1}}$ is not maximised by the $(d + 1)$-dimensional ball. 
\end{theorem}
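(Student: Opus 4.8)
The plan is to show that the largest volume-normalized eigenvalue in the degenerate cluster $\lambda = k$ strictly increases, to first order, under a suitable perturbation. Since the eigenvalues of the ball accumulating at $\lambda = k$ occupy the indices $1 + N_{d,k}$ through $N_{d, k+1}$, the eigenvalue $\Lambda_{N_{d,k+1}}$ is exactly the top of this cluster. By the analyticity result and the first-order expansion, the $N(d,k)$ eigenvalues in the cluster satisfy $\lambda_j(\Omega_\vareps) = k + \vareps\mu_j + O(\vareps^2)$, where $\mu_1 \le \dots \le \mu_{N(d,k)}$ are the eigenvalues of the Hermitian matrix (call it $M^{(k)}$) produced by that expansion. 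I would first reduce to mean-zero perturbations: writing $\rho = \bar\rho + \rho_0$ with $\bar\rho$ its mean and $\rho_0$ mean-zero, the domain $\Omega_\vareps$ agrees to first order in $\vareps$ with a dilation of the domain generated by $\rho_0$, so by scale-invariance of $\Lambda$ the first-order change of each $\Lambda_j$ is unaffected by $\bar\rho$. Thus we may assume $\int_{S^d}\rho\,d\hat\theta = 0$, in which case $|\Omega_\vareps|^{1/(d+1)} = |B|^{1/(d+1)}(1 + O(\vareps^2))$ and hence $\Lambda_j(\Omega_\vareps) = k|B|^{1/(d+1)} + \vareps|B|^{1/(d+1)}\mu_j + O(\vareps^2)$.

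The key structural point is that $M^{(k)}$ is traceless for every mean-zero $\rho$. Indeed, the trace $\sum_m M^{(k)}_{mm}$ is the integral of $\rho$ against a function built from $|Y_k^m|^2$ and $|\nabla_{S^d}Y_k^m|^2$ summed over $m$; by the addition theorem for hyperspherical harmonics this function is $SO(d+1)$-invariant, hence constant on $S^d$, so the trace sees only the mean of $\rho$ and vanishes when $\int_{S^d}\rho\,d\hat\theta = 0$. (Equivalently, this is the infinitesimal form of the scale-invariance of $\Lambda$: the cluster's average cannot move to first order.) Consequently $\sum_j \mu_j = 0$, so whenever $M^{(k)}$ is nonzero its largest eigenvalue satisfies $\mu_{N(d,k)} > 0$. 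For such a $\rho$ and small $\vareps > 0$ we then obtain $\Lambda_{N_{d,k+1}}(\Omega_\vareps) = k|B|^{1/(d+1)} + \vareps|B|^{1/(d+1)}\mu_{N(d,k)} + O(\vareps^2) > \Lambda_{N_{d,k+1}}(B)$, so the ball is not a maximizer.

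The remaining, and main, obstacle is to produce a single perturbation for which $M^{(k)}$ is not identically zero, i.e. one that genuinely splits the cluster at first order rather than merely shifting it. I would take $\rho = Y_{p,q}$ a single hyperspherical harmonic of degree $p = 2$, which is admissible because the entries of $M^{(k)}$ are governed by Gaunt coefficients of degrees $(k,k,p)$, whose selection rules $0 \le p \le 2k$ and even total degree are met for every $k \ge 1$. It then remains to verify that at least one of the associated Wigner $3j$-symbols is nonzero, so that $M^{(k)}$ has a nonvanishing off-diagonal entry and is therefore a nonzero traceless matrix; this nonvanishing is the concrete computational heart, and it follows from the closed-form $3j$-values established earlier. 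Conceptually this is simply the statement that multiplication by a degree-$2$ harmonic acts non-scalarly on the irreducible degree-$k$ eigenspace, which the Clebsch-Gordan decomposition for $SO(d+1)$ guarantees.
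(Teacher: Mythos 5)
Your skeleton is the same as the paper's: perturb by a single degree-$2$ harmonic $\rho = Y_{2, q}$, observe that the resulting matrix $M^{(d, k)}$ is traceless (this is Lemma \ref{thm:Matrix_trace}; your rotation-invariance argument — that $\sum_m \abs{Y_k^m}^2$ and $\sum_m \abs{\nabla_{S^d} Y_k^m}^2$ are basis-independent and hence constant on $S^d$ — is valid and in fact slicker than the paper's route through the gradient addition theorem, Theorem \ref{thm:SH_addition_grad}, since constancy rather than the exact constant is all that tracelessness needs), and conclude that a nonzero traceless Hermitian matrix has a positive top eigenvalue, so $\Lambda_{N_{d, k+1}}$ strictly increases at first order while the volume factor is unaffected. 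All of that is sound and matches the paper.

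The genuine gap is precisely the step you label "the concrete computational heart" and then do not carry out: verifying that $M^{(d, k)} \neq 0$ for this $\rho$. This is not a loose end; it is essentially the entire content of the paper's proof in Section \ref{sec:Wigner} (Lemma \ref{thm:Matrix_triple} plus Theorem \ref{thm:Matrix_3j} plus an explicit evaluation). Neither of your two justifications closes it. First, the closed-form expressions of Theorem \ref{thm:Matrix_3j} are \emph{sums} of products of $3j$-symbols, and satisfying the selection rules does not guarantee a nonzero total: the paper's closing remark records that for $p = 2k + 2\xi$ the analogous entries vanish through cancellations among $3j$-symbols. So one must exhibit specific indices at which the formula provably collapses to a nonzero value; the paper does this with $q = (0, 2, \dots, 2)$ and $m = n = (k, k, \dots, k)$, for which every $\diff_j^i = 0$, the triangle conditions force $\tau_1 = \tau_2 = 0$, each $I(T_{j-1}, T_j)$ reduces to a manifestly positive ratio of Gamma functions, and $I(T_1, T_2)$ is a product of the two $3j$-symbols $\mat{2}{k}{k}{0}{0}{0}$ and $\mat{2}{k}{k}{0}{k}{-k}$, which have known nonzero closed forms — note this is a \emph{diagonal} entry, not the off-diagonal one you propose. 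Second, the Clebsch–Gordan appeal does not suffice as stated: the decomposition of $\mathbf{H}_2^d \otimes \mathbf{H}_k^d$ guarantees that an intertwiner onto $\mathbf{H}_k^d$ \emph{exists}, but not that the particular intertwiner "multiply by $Y_{2, q}$ and project onto degree $k$" is nonzero; abstract representation theory cannot rule out that this specific map is zero. Excluding that requires an additional input, such as strict positivity of the Gegenbauer linearization coefficients (e.g. the degree-$2$ component of the square of a zonal degree-$k$ harmonic) or the explicit $3j$ computation the paper performs. As written, the decisive claim is asserted rather than proved.
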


\subsection{Outline} 
This paper is organised as follows. We begin by reviewing hyperspherical coordinates, hyperspherical harmonics, and compute the first-order asymptotic expansions for geometrical quantities related to $\Omega_\vareps$ in Section \ref{sec:prelim}. In Section \ref{sec:asymptotic}, we derive the first-order asymptotic expansion for Steklov eigenvalues of $\Omega_\vareps$; see Theorem \ref{thm:Oeps_matrix}. Section \ref{sec:trace} and Section \ref{sec:Wigner} are devoted to proving Theorem \ref{thm:Iso_stat} and Theorem \ref{thm:Iso_notball}, respectively. In Section \ref{sec:Wigner}, we also include the asymptotic result for the special case where the domain perturbation function is given by $\rho = Y_{p, q}(\hat\theta)$; see Theorem \ref{thm:Ypq}.


\section{Preliminaries} \label{sec:prelim} 
In this section, we first review vector calculus in hyperspherical coordinates. We then define and record several important properties of the hyperspherical harmonics. In particular, we derive the addition theorem for the derivatives of hyperspherical harmonics, which will be crucial in proving Theorem \ref{thm:Iso_notball}. Finally, we compute the first-order asymptotic expansions for the volume of $\Omega_\vareps$ and the unit outward normal vector $\n_{\rho, \vareps}$ to $\del\Omega_\vareps$. 

\subsection{Hyperspherical coordinates in $\R^{d + 1}$} \label{sec:Coordinates} 
Let $(x_1, x_2, \dots, x_{d + 1})$ denote the $(d + 1)$-dimensional Cartesian coordinates. The $(d + 1)$-dimensional hyperspherical coordinates $(r, \theta_1, \theta_2, \dots, \theta_d)$ are defined by the following equations: 
\begin{align*} 
x_{d + 1} & = r\cos\theta_d, \\ 
x_d & = r\sin\theta_d\cos\theta_{d - 1}, \\ 
x_{d - 1} & = r\sin\theta_d\sin\theta_{d - 1}\cos\theta_{d - 2}, \\ 
\vdots & \qquad \ \ \vdots \qquad \ \ \vdots \qquad \ \ \vdots \\ 
x_3 & = r\sin\theta_d\sin\theta_{d - 1}\dots\cos\theta_2, \\ 
x_2 & = r\sin\theta_d\sin\theta_{d - 1}\dots\sin\theta_2\sin\theta_1, \\ 
x_1 & = r\sin\theta_d\sin\theta_{d - 1}\dots\sin\theta_2\cos\theta_1,  \\ 
\end{align*} 
where the azimuth $0\le \theta_1 = \phi< 2\pi$ and inclinations $0\le \theta_2, \theta_3, \dots, \theta_d\le\pi$ define a $(d + 1)$-dimensional sphere with radius $r\ge 0$. The hyperspherical coordinates are an orthogonal curvilinear coordinate system in $\R^{d + 1}$. Define $\theta_0\coloneqq r$. The associated metric tensor $g$ is diagonal with components 
\begin{equation} \label{eq:SCmetric} 
g_{ij} = \sum_{k = 1}^{d + 1} \frac{\del x_k}{d\theta_i}\frac{\del x_k}{\del\theta_j} = h_i^2\delta_{i,j}, \ \ 0\le i, j\le d, 
\end{equation} 
where the scale factors are given by $h_0 = 1$ and $h_i = r\prod_{k = i + 1}^d \sin\theta_k$ for $i = 1, 2, \dots, d$; the latter includes the empty product which gives $h_d = r$. Here, $\delta_{i, j}$ denotes the usual Kronecker delta.

Let $\vec{r}$, $\vec{\theta}_1$, $\vec{\theta}_2$, \dots, $\vec{\theta}_d$ be orthonormal hyperspherical basis vectors and define $\eta_j = h_j/r$ for $j = 1, 2, \dots, d$. The gradient operator in hyperspherical coordinates is given by 
\begin{equation*} 
\nabla = \frac{\del}{\del r}\vec{r} + \frac{1}{r}\nabla_{S^d}, 
\end{equation*} 
where $\nabla_{S^d}$ is the gradient on $S^d$:  
\begin{equation} \label{eq:SCgrad} 
\nabla_{S^d} = \sum_{j = 1}^d \frac{1}{\eta_j}\frac{\del}{\del\theta_j}\vec{\theta}_j. 
\end{equation} 
The Laplacian in hyperspherical coordinates is given by 
\begin{equation*} 
\Delta = \frac{1}{r^d}\frac{\del}{\del r}\left(r^d\frac{\del}{\del r}\right) + \frac{1}{r^2}\Delta_{S^d}, 
\end{equation*} 
where $\Delta_{S^d}$ is the spherical Laplacian (Laplace-Beltrami operator) on $S^d$: 
\begin{equation} \label{eq:SCbeltrami} 
\Delta_{S^d} = \sum_{j = 1}^d \frac{1}{\eta_j^2\sin^{j - 1}(\theta_j)}\frac{\del}{\del\theta_j}\left(\sin^{j - 1}(\theta_j)\frac{\del}{\del\theta_j}\right). 
\end{equation} 
The volume element in hyperspherical coordinates is given by $dV = r^d dr\, d\sigma_d$, where $d\sigma_d$ is the surface element over $S^d$: 
\begin{equation*} \label{eq:SCsurface} 
d\sigma_d(\hat\theta) = \left(\prod_{j = 2}^d \sin^{j - 1}(\theta_j)\right) d\theta_1\, d\theta_2\dots d\theta_d.  
\end{equation*} 

\begin{remark} 
Throughout this paper, we denote with $\del_r$ and $\del_j$ the partial derivative with respect to $r$ and $\theta_j$, respectively, for $j = 1, 2, \dots, d$. 
\end{remark} 

\subsection{Hyperspherical harmonics on $S^d$} \label{sec:Harmonics} 
For $\ell\in\N = \{0, 1, 2, \dots\}$, let $\mathbf{H}_\ell^d$ denote the space of all hyperspherical harmonics of order $\ell$ on $S^d$. The dimension of $\mathbf{H}_\ell^d$ is the same as the multiplicity $N(d, \ell)$ of the Steklov eigenvalue $\lambda = \ell$ of the $(d + 1)$-dimensional unit ball $B$. Let $\{Y_\ell^m\}_{m = 1}^{N(d, \ell)}$ be an orthonormal basis of $\mathbf{H}_\ell^d$ with respect to the complex inner product on $L^2(S^d)$. The spaces $\mathbf{H}_\ell^d$ are pairwise orthonormal, \ie 
\begin{equation*} 
\int_{S^d} Y_\ell^m(\hat\theta)\cc{Y_k^n(\hat\theta)}\, d\sigma_d = \delta_{\ell, k}\delta_{m, n}. 
\end{equation*} 
and the family $\{Y_\ell^m\}_{\ell \in \N, 1 \leq m \leq N(d,\ell)}$ forms a complete orthonormal basis of $L^2(S^d)$.
It is well-known that each $Y_\ell^m$ is an eigenfunction of the spherical Laplacian $\Delta_{S^d}$ corresponding to the eigenvalue $-\ell(\ell + d - 1)$, \ie 
\begin{equation} \label{eq:SH_eigs} 
\Delta_{S^d}Y_\ell^m(\hat\theta) = -\ell(\ell + d - 1)Y_\ell^m(\hat\theta), \ \ 1\le m\le N(d, \ell). 
\end{equation} 
Multiplying \eqref{eq:SH_eigs} with $\cc{Y_k^n}$ and integrating by parts over $S^d$, we obtain the following integral identity: 
\begin{equation} \label{eq:SH_eigs_grad} 
\int_{S^d} \nabla_{S^d}Y_\ell^m(\hat\theta)\cdot \nabla_{S^d}\cc{Y_\ell^n(\hat\theta)}\, d\sigma_d = \ell(\ell + d - 1)\delta_{m, n}, \ \ 1\le m, n\le N(d, \ell). 
\end{equation}

Let $P_n^\alpha$ and $C_n^{(\alpha)}$ denote the \emph{associated Legendre polynomial} and the \emph{Gegenbauer (ultrapsherical) polynomial} of degree $n$, respectively, which can be defined through the Rodrigues formulas (see \cite[Table.~18.5.1]{NIST} and \cite[Eqs.~6.27 \& 6.29]{Williams}):
\begin{align*} 
P_n^\alpha(z) & = \frac{(-1)^\alpha}{2^n n!}(1 - z^2)^{\alpha/2}\frac{d^{n + \alpha}}{dz^{n + \alpha}}\left(z^2 - 1\right)^n, \\ 
C_n^{(\alpha)}(z) & = \frac{(2\alpha)_n}{(-2)^n\left(\alpha + \frac{1}{2}\right)_n n!}\left(1 - z^2\right)^{-\alpha + \frac{1}{2}}\frac{d^n}{dz^n}\left(1 - z^2\right)^{n + \alpha - \frac{1}{2}}, \ \ \alpha > - \frac{1}{2}, \, \alpha\neq 0, 
\end{align*} 
where $(z)_n = \Gamma(z + n)/\Gamma(z)$ is the Pochhammer's symbol; see \cite[Eq.~5.2.5]{NIST}. We define the complex hyperspherical harmonics $Y_\ell^m$ of degree $\ell$ on $S^d$ as 
\begin{equation*} 
Y_\ell^m(\hat\theta) = \widetilde Y_{m_2}^{m_1}(\phi, \theta_2)\prod_{j = 3}^d Y(\theta_j; m_{j - 1}, m_j), 
\end{equation*} 
where $m\coloneqq \left(m_1, m_2, \cdots, m_{d - 1}\right)$ is any $(d - 1)$-tuple satisfying the inequality 
\begin{equation} \label{eq:SH_tuple}
0\le \abs{m_1}\le m_2\le \dots\le m_{d - 1}\le m_d\coloneqq\ell, 
\end{equation} 
and $\widetilde Y_{m_2}^{m_1}$ is the three-dimensional complex spherical harmonics
\begin{equation*} 
\widetilde Y_{m_2}^{m_1}(\phi, \theta_2) = \sqrt{\frac{(2m_2 + 1)}{4\pi}\frac{(m_2 - m_1)!}{(m_2 + m_1)!}}\, e^{im_1\phi}P_{m_2}^{m_1}(\cos\theta_2). 
\end{equation*}
The functions $Y(\theta_j; m_{j - 1}, m_j)$ are real-valued and they are defined by (see \cite[Section II]{wen1985})
\begin{equation*} 
Y(\theta_j; m_{j - 1}, m_j) = \frac{1}{\mu_j}\left(\sin\theta_j\right)^{m_{j - 1}}C_{m_j - m_{j - 1}}^{\left(m_{j - 1} + \frac{j - 1}{2}\right)}(\cos\theta_j), \ \ j = 3, 4, \dots, d, 
\end{equation*} 
where $\mu_j$ is the normalisation constant of $Y(\theta_j; m_{j - 1}, m_j)$ (with respect to the measure $\sin^{j - 1}(\theta_j)\, d\theta_j$) satisfying 
\begin{equation*} 
\mu_j^2 = \frac{4\pi\Gamma(m_j + m_{j - 1} + j - 1)}{2^{2m_{j - 1} + j}(m_j - m_{j - 1})!\left(m_j + \frac{j - 1}{2}\right)\Gamma^2\left(m_{j - 1} + \frac{j - 1}{2}\right)}. 
\end{equation*} 
Here, $\Gamma(z)$ is the standard Gamma function and we define $\Gamma^2(z) = \left[\Gamma(z)\right]^2$. The real hyperspherical harmonics $Y_{\ell, m}$ of degree $\ell$ on $S^d$ can be defined in the same way as the three-dimensional real spherical harmonics $\widetilde Y_{m_2, m_1}$, \ie  
\begin{equation*} 
Y_{\ell, m}(\hat\theta) = \widetilde Y_{m_2, m_1}(\phi, \theta_2)\prod_{j = 3}^d Y(\theta_j; m_{j - 1}, m_j), 
\end{equation*} 
where 
\begin{equation*} \label{eq:SH_real} 
\widetilde Y_{m_2, m_1}(\phi, \theta_2) = \begin{dcases} 
\, \frac{i}{\sqrt{2}}\left[\widetilde Y_{m_2}^{m_1}(\phi, \theta_2) - (-1)^{m_1}\widetilde Y_{m_2}^{-m_1}(\phi, \theta_2)\right] & \ \ \tn{ if } m_1 < 0, \\ 
\, \widetilde Y_{m_2}^{0}(\phi, \theta_2) & \ \ \tn{ if } m_1 = 0, \\ 
\, \frac{1}{\sqrt{2}}\left[\widetilde Y_{m_2}^{-m_1}(\phi, \theta_2) + (-1)^{m_1}\widetilde Y_{m_2}^{m_1}(\phi, \theta_2)\right] & \ \ \tn{ if } m_1 > 0. 
\end{dcases} 
\end{equation*} 
It is straightforward to verify that the set of real hyperspherical harmonics are pairwise orthonormal on $L^2(S^d)$. For notational simplicity, throughout this paper we will suppress the dependence of $\hat\theta$ on $\rho$ and hyperspherical harmonics whenever appropriate.

\begin{remark} 
Whenever we are counting all possible hyperspherical harmonics as $1\le m\le N(d, \ell)$ for a fixed degree $\ell\in\N$, this should be understood as counting over all tuples $m = \left(m_1, m_2, \dots, m_{d - 1}\right)$ satisfying the condition \eqref{eq:SH_tuple}. Take for instance $d = 3$ and $\ell = 2$. We then have $1\le m\le N(3, 2) = 9$ and the $9$ possible tuples $(m_1, m_2)$ satisfying $0\le \abs{m_1}\le m_2\le m_3 = \ell = 2$ are 
\begin{align*} 
(0, 0), \, (0, 1), \, (0, 2), \, (1, 1), \, (1, 2), \, (-1, 1), \, (-1, 2), \, (2, 2), \, (-2, 2). 
\end{align*} 
\end{remark} 

\begin{remark} 
For any $\ell\in\N$, we will assume that the index $m = 1$ corresponds to the trivial tuple $(0, 0, \dots, 0)$. With this convention, the constant hyperspherical harmonic is $Y_0^1 = Y_{0, 1} = \abs{S^d}^{-1/2}$. 
\end{remark}

Another important result about hyperspherical harmonics is the addition theorem (see \cite[Eq.~51]{wen1985}), which states that 
\begin{equation} \label{eq:SH_addition}
\sum_{m = 1}^{N(d, \ell)} Y_\ell^m(\hat\theta)\cc{Y_\ell^m(\hat\theta')} = K(d, \ell)C_\ell^{\left(\frac{d - 1}{2}\right)}(\vec{u}\cdot\vec{u}'), \ \ K(d, \ell)\coloneqq \frac{N(d, \ell)}{\abs{S^d}\, C_\ell^{\left(\frac{d - 1}{2}\right)}(1)}, 
\end{equation} 
for any unit vector $\vec{u}, \vec{u}'\in S^d$ with corresponding angular coordinates $\hat\theta, \hat\theta'$. Setting $\vec{u} = \vec{u}'$, we have $\vec{u}\cdot\vec{u} = 1$ and 
\begin{equation} \label{eq:SH_addition_same} 
\sum_{m = 1}^{N(d, \ell)} \abs{Y_\ell^m(\hat\theta)}^2 = \frac{N(d, \ell)}{\abs{S^d}}. 
\end{equation} 
We now establish the addition theorem for the partial derivatives of hyperspherical harmonics when $\vec{u} = \vec{u}'$. Our proof is inspired by \cite{winch1995}.

\begin{theorem} \label{thm:SH_addition_grad}
Let $d\ge 3$ and $\ell\in\N$. For all $j = 1, 2, \dots, d$, we have  
\begin{equation*} 
\sum_{m = 1}^{N(d, \ell)} \frac{1}{\eta_j^2}\abs{\del_jY_\ell^m(\hat\theta)}^2 = (d - 1)K(d, \ell)C_{\ell - 1}^{\left(\frac{d + 1}{2}\right)}(1). 
\end{equation*} 
\end{theorem}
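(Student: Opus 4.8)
The plan is to obtain the identity by differentiating the addition theorem \eqref{eq:SH_addition} in two \emph{independent} angular arguments and then restricting to the diagonal, exactly as the scalar identity \eqref{eq:SH_addition_same} arises by setting $\vec{u}=\vec{u}'$. Set
\[
F(\hat\theta,\hat\theta')=\sum_{m=1}^{N(d,\ell)} Y_\ell^m(\hat\theta)\,\cc{Y_\ell^m(\hat\theta')}=K(d,\ell)\,C_\ell^{\left(\frac{d-1}{2}\right)}(t),\qquad t\coloneqq\vec{u}\cdot\vec{u}',
\]
where $\vec{u},\vec{u}'\in S^d$ are the unit vectors with angular coordinates $\hat\theta,\hat\theta'$. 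Writing $\del_j$ for $\del/\del\theta_j$ and $\del_j'$ for $\del/\del\theta_j'$, I would apply $\del_j$ in the first argument and $\del_j'$ in the second. Since the sum is finite there is no issue exchanging differentiation and summation, and because $\del_j'\cc{Y_\ell^m(\hat\theta')}=\cc{\del_j Y_\ell^m(\hat\theta')}$, setting $\hat\theta'=\hat\theta$ yields $\del_j\del_j' F\big|_{\hat\theta'=\hat\theta}=\sum_m \abs{\del_j Y_\ell^m(\hat\theta)}^2$. Dividing by $\eta_j^2$ produces precisely the left-hand side of the theorem, so it remains to evaluate $\frac{1}{\eta_j^2}\del_j\del_j'\big[K(d,\ell)C_\ell^{((d-1)/2)}(t)\big]$ at $\hat\theta'=\hat\theta$.

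For this, write $g(t)=K(d,\ell)C_\ell^{((d-1)/2)}(t)$, so that $\del_j\del_j' g(t)=g''(t)(\del_j t)(\del_j' t)+g'(t)\,\del_j\del_j' t$. The key simplification is that the $g''$ term drops out on the diagonal: since $\vec{u}$ is a unit vector, $\del_j t\big|_{\hat\theta'=\hat\theta}=(\del_j\vec{u})\cdot\vec{u}=\tfrac12\del_j(\vec{u}\cdot\vec{u})=0$, and identically $\del_j' t\big|_{\hat\theta'=\hat\theta}=0$. Hence only $g'(1)\,\del_j\del_j' t\big|_{\hat\theta'=\hat\theta}$ survives.

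Next I would compute the surviving mixed second derivative. Because $\vec{u}$ depends only on $\hat\theta$ and $\vec{u}'$ only on $\hat\theta'$, one has $\del_j\del_j' t=(\del_j\vec{u})\cdot(\del_j'\vec{u}')$, which at $\hat\theta'=\hat\theta$ equals $\abs{\del_j\vec{u}}^2$. The hyperspherical coordinate system is orthogonal, so the induced metric on $S^d$ is diagonal with $\del_i\vec{u}\cdot\del_j\vec{u}=h_j^2\big|_{r=1}\,\delta_{i,j}$; by \eqref{eq:SCmetric} and $\eta_j=h_j/r$ this gives $\abs{\del_j\vec{u}}^2=\eta_j^2$. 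Therefore the factor $\eta_j^2$ cancels the prefactor $1/\eta_j^2$, leaving $K(d,\ell)\,g'(1)=K(d,\ell)\,\tfrac{d}{dt}C_\ell^{((d-1)/2)}(1)$. Finally, applying the standard Gegenbauer derivative identity $\tfrac{d}{dt}C_n^{(\alpha)}(t)=2\alpha\,C_{n-1}^{(\alpha+1)}(t)$ with $\alpha=\tfrac{d-1}{2}$ converts this into $(d-1)K(d,\ell)\,C_{\ell-1}^{((d+1)/2)}(1)$, which is exactly the claimed value.

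I expect the main obstacle to be the geometric step identifying $\abs{\del_j\vec{u}}^2=\eta_j^2$ and confirming that no off-diagonal cross terms contaminate $\del_j\del_j' t$ on the diagonal; the orthogonality of the coordinate system together with the explicit scale factors from \eqref{eq:SCmetric} is exactly what makes this clean, and I would state it carefully. The degenerate case $\ell=0$ can be dispatched separately, since both sides vanish, consistent with the convention $C_{-1}^{(\alpha)}\equiv 0$.
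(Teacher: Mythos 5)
Your proposal is correct and follows essentially the same route as the paper's proof: differentiate the addition theorem once in each angular argument, restrict to the diagonal $\vec{u}=\vec{u}'$ where $\vec{u}\cdot\del_j\vec{u}=0$ kills the second-derivative term, identify $\abs{\del_j\vec{u}}^2=\eta_j^2$ via the metric \eqref{eq:SCmetric}, and finish with the Gegenbauer derivative identity. Your write-up is in fact slightly more careful than the paper's on two minor points (explicitly tracking the complex conjugate in the differentiated sum, and disposing of the $\ell=0$ case), but the argument is the same.
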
 
\begin{proof} 
For simplicity of notation, we write $C_\ell^{\left(\frac{d - 1}{2}\right)}(\vec{u}\cdot\vec{u}') = C(\vec{u}\cdot\vec{u}') = C(z)$. For any fixed $j = 1, 2, \dots, d$, differentiating \eqref{eq:SH_addition} with respect to $\theta_j$ first and then $\theta_j'$ yields 
\begin{equation} \label{eq:SH_addition_grad1}
\sum_{m = 1}^{N(d, \ell)} \del_jY_k^m(\hat\theta)\del_{j'}Y_k^m(\hat\theta') = K(d, \ell)\left[\frac{d^2C}{dz^2}\left[\vec{u}\cdot\del_{j'}\vec{u}'\right]\left[\del_j\vec{u}\cdot\vec{u}'\right] + \frac{dC}{dz}\left[\del_j\vec{u}\cdot \del_{j'}\vec{u}\right]\right]. 
\end{equation} 
In the case of $\vec{u} = \vec{u}'$, we know that $z = \vec{u}\cdot\vec{u} = 1$ and this implies $\vec{u}\cdot\del_j\vec{u} = 0$. Since $\vec{u}\in S^d$ can be written as $\vec{u} = \left(x_1, x_2, \dots, x_{d + 1}\right)/r$, computing $\abs{\del_j\vec{u}}^2$ gives 
\begin{equation*} 
\abs{\del_j\vec{u}}^2 = \frac{1}{r^2}\sum_{k = 1}^{d + 1} \left(\frac{\del x_k}{\del\theta_j}\right)^2 = \frac{h_j^2}{r^2} = \eta_j^2, 
\end{equation*} 
thanks to \eqref{eq:SCmetric}. Consequently, setting $\vec{u} = \vec{u}'$ in \eqref{eq:SH_addition_grad1} and rearranging yields  
\begin{align*} 
\sum_{m = 1}^{N(d, \ell)} \frac{1}{\eta_j^2}\abs{\del_jY_\ell^m(\hat\theta)}^2 & = K(d, \ell)\frac{dC}{dz}\bigg|_{z = 1} = K(d, \ell)\cdot (d - 1)C_{\ell - 1}^{\left(\frac{d + 1}{2}\right)}(1),  
\end{align*} 
where we use the derivative formula \cite[Eq.~18.9.19]{NIST}. The desired result now follows. 
\end{proof} 

\subsection{Asymptotic expansions for geometric quantities} 
Let $\abs{S^d}$ and $\abs{B} = \abs{S^d}/(d + 1)$ denote the surface area of $S^d$ and the volume of the $(d + 1)$-dimensional unit ball $B$, respectively. Using the orthogonality of hyperspherical harmonics, we see from \eqref{eq:Domain} that 
\begin{equation*} 
\int_{S^d} \rho(\hat\theta)\, d\sigma_d = \int_{S^d} A_{0, 1}Y_{0, 1}\, d\sigma_d = A_{0, 1}\abs{S^d}^{1/2}. 
\end{equation*} 
Thus, an asymptotic expansion for the volume of $\Omega_\vareps$ is given by
\begin{align*} 
\abs{\Omega_\vareps} = \int_{S^d} \int_0^{1 + \vareps\rho(\hat\theta)} r^d\, dr\, d\sigma_d & = \frac{1}{d + 1}\int_{S^d} \left(1 + \vareps\rho(\hat\theta)\right)^{d + 1}\, d\sigma_d \\ 
& = \frac{\abs{S^d}}{d + 1} + \vareps\int_{S^d} \rho(\hat\theta)\, d\sigma_d + O(\vareps^2) \\ 
& = \abs{B} + \vareps A_{0, 1}\abs{S^d}^{1/2} + O(\vareps^2). 
\end{align*} 
In particular, we have that 
\begin{equation} \label{eq:Asymp_volume} 
\begin{aligned}  
\abs{\Omega_\vareps}^{\frac{1}{d + 1}} & = \abs{B}^{\frac{1}{d + 1}} + \vareps\left(\frac{1}{d + 1}\abs{B}^{\frac{1}{d + 1} - 1}A_{0, 1}\abs{S^d}^{1/2}\right) + O(\vareps^2) \\ 
& = \abs{B}^{\frac{1}{d + 1}} + \vareps\left(\frac{A_{0, 1}\abs{B}^{\frac{1}{d + 1}}}{\abs{S^d}^{1/2}}\right) + O(\vareps^2). 
\end{aligned} 
\end{equation}

Next we find an asymptotic expansion for the unit outward normal vector $\n_{\rho, \vareps}$ to $\del\Omega_\vareps$. By identifying $\del\Omega_\vareps$ as the zero level set of an implicit function, it can be shown that (see \cite[Section 5]{tan2023}) 
\begin{equation*} 
\n_{\rho, \vareps} = \Big((1 + \vareps\rho)^2 + \vareps^2\abs{\nabla_{S^d}\rho}^2\Big)^{-1/2}\Big[(1 + \vareps\rho)\vec{r} - \vareps\nabla_{S^d}\rho\Big]. 
\end{equation*} 
It follows that 
\begin{equation} \label{eq:Asymp_normal} 
\n_{\rho, \vareps} = \Big(1 - \vareps\rho + O(\vareps^2)\Big)\Big[\vec{r} + \vareps\rho\, \vec{r} - \vareps\nabla_{S^d}\rho\Big] = \vec{r} - \vareps\nabla_{S^d}\rho + O(\vareps^2). 
\end{equation}


\section{An Asymptotic Expansion for Steklov Eigenvalues of Nearly Hyperspherical Domains} \label{sec:asymptotic} 
In this section, we derive an asymptotic expansion for the Steklov eigenvalues $\lambda(\vareps)\coloneqq \lambda^\vareps$ on a nearly hyperspherical domain $\Omega_\vareps$ of the form in \eqref{eq:Domain}. Recall that the unperturbed eigenvalues of $\Omega_0 = B$ are the nonnegative integers $\ell\in\N$, with corresponding eigenfunctions $r^\ell Y_\ell^m(\hat\theta)$, $1\le m\le N(d, \ell)$. Following \cite{viator2022}, for fixed positive integer $k\in\Z^+$, we make the following perturbation ansatz in $\vareps$ for a Steklov eigenpair $(\lambda_k^\vareps, u_k^\vareps)$ of $\Omega_\vareps$ (not counting multiplicity): 
\begin{subequations} \label{eq:Pert_ansatz} 
\begin{align} 
\label{eq:Pert_ansatz1} \lambda_k^\vareps & = k + \vareps \lambda_k^{(1)} + O(\vareps^2), \\ 
\label{eq:Pert_ansatz2} u_k^\vareps(r, \hat\theta) & = \sum_{\ell = 0}^\infty\sum_{m = 1}^{N(d, \ell)} \Big(\delta_{\ell, k}\alpha_m + \vareps\beta_{\ell, m} + O(\vareps^2)\Big)r^\ell Y_\ell^m(\hat\theta). 
\end{align} 
\end{subequations} 
Note that we cannot apriori determine the coefficients $\alpha_m$ that will select the $O(1)$ eigenfunction from the $N(d, k)$-dimensional eigenspace.

The ansatz \eqref{eq:Pert_ansatz2} satisfies \eqref{eq:Steklov1} exactly and we will determine the eigenvalue perturbation $\lambda_k^{(1)}$ and the coefficients $\alpha_m$ and $\beta_{\ell, m}$ so that the boundary condition \eqref{eq:Steklov2} is satisfied. Using the gradient \eqref{eq:SCgrad} in hyperspherical coordinates, we have that 
\begin{equation} \label{eq:Pert_grad1} 
\nabla u_k^\vareps(r, \hat\theta) = \sum_{\ell = 0}^\infty\sum_{m = 1}^{N(d, \ell)} \Big(\delta_{\ell, k}\alpha_m + \vareps\beta_{\ell, m} + O(\vareps^2)\Big) r^{\ell - 1}\, \vec{v}_{\ell, m}, 
\end{equation} 
where 
\begin{equation} \label{eq:Pert_grad2} 
\vec{v}_{\ell, m} = \ell Y_\ell^m\vec{r} + \nabla_{S^d}Y_\ell^m. 
\end{equation} 
The boundary condition \eqref{eq:Steklov2} reads 
\begin{equation} \label{eq:Pert_BC} 
\nabla u_k^\vareps\cdot \n_{\rho, \vareps} = \lambda_k^\vareps u_k^\vareps \ \ \tn{ on } r = 1 + \vareps\rho(\hat\theta). 
\end{equation} 
Substituting \eqref{eq:Pert_grad1} and the asymptotic expansion \eqref{eq:Asymp_normal} for $\n_{\rho, \vareps}$ into the left-hand side (LHS) of \eqref{eq:Pert_BC} and collecting terms in powers of $\vareps$, we obtain 
\begin{equation*} 
\nabla u_k^\vareps\cdot\n_{\rho, \vareps} = \left(\sum_{m = 1}^{N(d, k)} k\alpha_m Y_k^m\right) + \vareps L_1 + O(\vareps^2),
\end{equation*} 
where 
\begin{align*} 
L_1 & = \sum_{\ell = 0}^\infty\sum_{m = 1}^{N(d, \ell)} \Big(\delta_{\ell, k}\alpha_m\left((\ell - 1)\rho\, \vec{r} - \nabla_{S^d}\rho\right) + \beta_{\ell, m}\vec{r}\Big)\cdot\vec{v}_{\ell, m} \\ 
& \stackrel{\eqref{eq:Pert_grad2}}{=} \sum_{m = 1}^{N(d, k)} \alpha_m\Big(k(k - 1)\rho Y_k^m - \nabla_{S^d}\rho\cdot\nabla_{S^d}Y_k^m\Big) + \sum_{\ell = 0}^\infty\sum_{m = 1}^{N(d, \ell)} \ell \beta_{\ell, m}Y_\ell^m. 
\end{align*} 
Substituting the perturbation ansatz \eqref{eq:Pert_ansatz} into the right-hand side (RHS) of \eqref{eq:Pert_BC} and collecting terms in powers of $\vareps$, we obtain 
\begin{equation*} 
\lambda_k^\vareps u_k^\vareps = \left(\sum_{m = 1}^{N(d, k)} k\alpha_m Y_k^m\right) + \vareps R_1 + O(\vareps^2),
\end{equation*} 
where 
\begin{align*} 
R_1 & = \sum_{\ell = 0}^\infty\sum_{m = 1}^{N(d, \ell)} \Big(k\left(\beta_{\ell, m} + \delta_{\ell, k}\alpha_m\ell\rho\right) + \lambda_k^{(1)}\delta_{\ell, k}\alpha_m\Big)Y_\ell^m \\ 
& = \sum_{\ell = 0}^\infty\sum_{m = 1}^{N(d, \ell)} k\beta_{\ell, m}Y_\ell^m + \sum_{m = 1}^{N(d, k)} \Big(k^2\rho + \lambda_k^{(1)}\Big)\alpha_m Y_k^m. 
\end{align*} 
The $O(1)$ terms in the LHS and RHS of \eqref{eq:Pert_BC} coincide, as expected. Rearranging the $O(\vareps)$ equation $L_1 = R_1$, we obtain 
\begin{equation} \label{eq:Pert_Oeps}  
\sum_{m = 1}^{N(d, k)} \lambda_k^{(1)}\alpha_mY_k^m = -\sum_{m = 1}^{N(d, k)} \alpha_m\Big(k\rho Y_k^m + \nabla_{S^d}\rho\cdot \nabla_{S^d}Y_k^m\Big) + \sum_{\ell = 0}^\infty\sum_{m = 1}^{N(d, \ell)} (\ell - k)\beta_{\ell, m}Y_k^m.  
\end{equation} 

If we now multiply \eqref{eq:Pert_Oeps} by $\cc{Y_k^n}$ for $1\le n\le N(d, k)$, integrate over $S^d$ with respect to $d\sigma_d$, and use the pairwise orthonormality of the hyperspherical harmonics, we see that the resulting sum on the left is nonzero only for $m = n$ and the third sum vanish for all $m$. This yields 
\begin{equation*} 
\lambda_k^{(1)}\alpha_n = \sum_{m = 1}^{N(d, k)} M_{m, n}^{(d, k)}\alpha_m, \ \ 1\le n\le N(d, k), 
\end{equation*} 
or more succinctly, $M^{(d, k)}\vec{\alpha} = \lambda_k^{(1)}\vec{\alpha}$, where the complex matrix $M^{(d, k)}\in\C^{N(d, k)\times N(d, k)}$ has entries given by
\begin{gather} \label{eq:Oeps_matrix}
\begin{aligned}  
M_{m, n}^{(d, k)} & = -\int_{S^d} k\rho Y_k^m\cc{Y_k^n}\, d\sigma_d - \int_{S^d} \left(\nabla_{S^d}\rho\cdot \nabla_{S^d}Y_k^m\right)\cc{Y_k^n}\, d\sigma_d \\ 
\end{aligned} 
\end{gather} 
This shows that the first-order perturbation of these $N(d, k)$ eigenvalues are characterised by the eigenvalues of $M^{(d, k)}$. Since $M^{(d, k)}$ is Hermitian, there are $N(d, k)$ real eigenvalues $\lambda_{k, j}^{(1)}$, $1\le j\le N(d, k)$, which we enumerate in increasing order. Moreover, the components of the corresponding eigenvectors $\vec{\alpha}_j$ are the coefficients of the $O(1)$ eigenfunctions in $u_{k, j}^\vareps$, \ie 
\begin{align*} 
u_{k, j}^{(0)}(r, \hat\theta) = \sum_{m = 1}^{N(d, k)} \left(\vec{\alpha}_j\right)_m r^kY_k^m(\hat\theta), \ \ 1\le j\le N(d, k). 
\end{align*} 
We summarise these results and the analyticity result from \cite{tan2023} in the following theorem.

\begin{theorem} \label{thm:Oeps_matrix} 
Given $d\ge 3$ and $k\in\Z^+$, define $N_k = \sum_{\ell = 1}^{k - 1} N(d, \ell)$, where $N(d, \ell)$ is defined by \eqref{eq:Steklov_multiplicity}. For $N_k + 1\le n\le N_{k + 1}$, the Steklov eigenvalues $\lambda_n(\vareps)$ of a nearly hyperspherical domain $\Omega_\vareps$ of the form \eqref{eq:Domain} consist of at most $N(d, k)$ branches of analytic functions which have at most algebraic singularities near $\vareps = 0$. At first-order in $\vareps$, the perturbation is given by the real eigenvalues of the Hermitian matrix $M^{(d, k)}$ of size $N(d, k)$, whose entries are given by \eqref{eq:Oeps_matrix}. 
\end{theorem}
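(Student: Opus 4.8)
The plan is to split the statement into two independent parts: the qualitative analyticity-and-branching claim, which I would import wholesale from the companion analyticity result, and the quantitative first-order claim, which I would read off from the Rayleigh--Schr\"odinger-type computation already assembled above.

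For the analyticity part I would invoke \cite{tan2023}: the Steklov eigenvalues of $\Omega_\vareps$ coincide with the eigenvalues of the Dirichlet-to-Neumann operator $G_{\rho, \vareps}$, which, after pulling back to the fixed ball $B$, forms a family analytic in $\vareps$. Since the unperturbed eigenvalue $\lambda = k$ has finite multiplicity $N(d, k)$, the standard analytic perturbation theory for such families (Kato--Rellich, together with the Puiseux expansion of the characteristic roots) guarantees that the $N(d, k)$ eigenvalues emanating from $\lambda = k$ organise into at most $N(d, k)$ branches, analytic in a punctured neighbourhood of $\vareps = 0$ and with at most algebraic singularities at the origin. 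This is precisely the first assertion, and it simultaneously certifies that the ansatz \eqref{eq:Pert_ansatz} with a well-defined first-order coefficient $\lambda_k^{(1)}$ is legitimate along each branch.

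For the first-order part I would carry out the computation displayed above: substitute \eqref{eq:Pert_ansatz} into \eqref{eq:Steklov1}--\eqref{eq:Steklov2}, noting that \eqref{eq:Steklov1} holds identically because every $r^\ell Y_\ell^m$ is harmonic, and expand the boundary condition on $r = 1 + \vareps\rho$ using the normal expansion \eqref{eq:Asymp_normal} together with the Taylor expansions of $r^{\ell - 1}$ and $r^\ell$ about $r = 1$. Matching powers of $\vareps$ reproduces \eqref{eq:Pert_Oeps}; projecting against $\cc{Y_k^n}$ and using pairwise orthonormality collapses the right-most double sum (the factor $\ell - k$ kills the $\ell = k$ term while orthogonality kills every $\ell \neq k$ term), leaving the finite linear system $M^{(d, k)}\vec{\alpha} = \lambda_k^{(1)}\vec{\alpha}$ with entries \eqref{eq:Oeps_matrix}. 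It then remains to check that $M^{(d, k)}$ is Hermitian, so that the admissible first-order coefficients are exactly its real eigenvalues. The first term of \eqref{eq:Oeps_matrix} is Hermitian because $\rho$ is real; for the gradient term I would integrate by parts on the closed manifold $S^d$ and apply \eqref{eq:SH_eigs} to obtain
\[
\int_{S^d}(\nabla_{S^d}\rho\cdot\nabla_{S^d}Y_k^m)\cc{Y_k^n}\, d\sigma_d = k(k + d - 1)\int_{S^d}\rho\, Y_k^m\cc{Y_k^n}\, d\sigma_d - \int_{S^d}\rho\,\nabla_{S^d}Y_k^m\cdot\nabla_{S^d}\cc{Y_k^n}\, d\sigma_d,
\]
whose two terms are each invariant under $(m, n)\mapsto(n, m)$ combined with complex conjugation. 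This recasts the matrix in the manifestly symmetric form $M_{m, n}^{(d, k)} = -k(k + d)\int_{S^d}\rho\, Y_k^m\cc{Y_k^n}\, d\sigma_d + \int_{S^d}\rho\,\nabla_{S^d}Y_k^m\cdot\nabla_{S^d}\cc{Y_k^n}\, d\sigma_d$, establishing Hermiticity and hence reality of the spectrum.

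The main obstacle I anticipate is not any single computation but the rigorous bookkeeping that links the formal expansion to the genuine analytic branches: one must ensure that the first-order coefficient extracted on the degenerate eigenspace really equals the $\vareps$-derivative of each branch, and that the undetermined $O(1)$ coefficients $\alpha_m$ are correctly selected as the eigenvectors of $M^{(d, k)}$. This degenerate-perturbation step---guaranteeing that the splitting of the $N(d, k)$-fold eigenvalue is governed precisely by the spectrum of $M^{(d, k)}$---is exactly where the analyticity input from \cite{tan2023} is indispensable and must be invoked with care.
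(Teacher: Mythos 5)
Your proposal is correct and follows essentially the same route as the paper: the formal ansatz \eqref{eq:Pert_ansatz}, expansion of the boundary condition via \eqref{eq:Asymp_normal}, and projection against $\cc{Y_k^n}$ to obtain the finite system $M^{(d,k)}\vec{\alpha} = \lambda_k^{(1)}\vec{\alpha}$, combined with the analyticity/branching statement imported wholesale from \cite{tan2023}. Your integration-by-parts verification of Hermiticity, which recasts the entries as $M_{m, n}^{(d, k)} = \int_{S^d}\rho\left(-k(k + d)Y_k^m\cc{Y_k^n} + \nabla_{S^d}Y_k^m\cdot\nabla_{S^d}\cc{Y_k^n}\right)d\sigma_d$, is precisely the identity the paper establishes later as Lemma \ref{thm:Oeps_matrix1} (there proved by coordinate-wise integration by parts in hyperspherical coordinates), so you have simply pre-verified a detail the paper defers to Section \ref{sec:trace}.
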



\section{Analysis of $M^{(d, k)}$} \label{sec:trace} 
This section is dedicated to the proof of Theorem \ref{thm:Iso_notball}. Following \cite[Theorem 1.1]{viator2022}, the crux of the proof lies in showing that the trace of $M^{(d, k)}$ is proportional to $\int_{S^d} \rho\, d\sigma_d$, the mean of the domain perturbation function $\rho$. This is achieved by rewriting $M^{(d,k)}$ as the sum of a scalar multiple of the identity and a trace-zero Hermitian matrix. For notational simplicity, we write the surface element over $S^d$ as $d\sigma_d = \sin^{j - 1}(\theta_j)\, d\theta_j\, d\sigma_{d - 1, j}$, where 
\begin{equation*} 
d\sigma_{d - 1, j} = \prod_{i = 2, i\neq j}^d \sin^{i - 1}(\theta_i)\prod_{i = 1, i\neq j}^d d\theta_i. 
\end{equation*}

\begin{lemma} \label{thm:Oeps_matrix1} 
Given $d\ge 3$ and $k\in\Z^+$, let $M^{(d, k)}$ be the Hermitian matrix defined in Theorem \ref{thm:Oeps_matrix}. The entries of $M^{(d, k)}$ can be written as 
\begin{equation} \label{eq:Oeps_matrix1} 
M_{m, n}^{(d, k)} = \int_{S^d} \rho(\hat\theta)\left(-k(k + d)Y_k^m(\hat\theta)\cc{Y_k^n(\hat\theta)} + \nabla_{S^d}Y_k^m(\hat\theta)\cdot\nabla_{S^d}\cc{Y_k^n(\hat\theta)}\right) d\sigma_d. 
\end{equation} 
\end{lemma}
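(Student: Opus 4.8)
The plan is to start from the definition \eqref{eq:Oeps_matrix}, leave the first integral $-\int_{S^d} k\rho Y_k^m\cc{Y_k^n}\, d\sigma_d$ untouched, and transform only the second integral so that every derivative falls on the smooth hyperspherical harmonics rather than on the (merely $C^1$ or $C^2$) perturbation function $\rho$. The single tool needed is integration by parts on the closed manifold $S^d$, together with the Laplace--Beltrami eigenvalue relation \eqref{eq:SH_eigs}.

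Concretely, I would first regroup the integrand as $\left(\nabla_{S^d}\rho\cdot\nabla_{S^d}Y_k^m\right)\cc{Y_k^n} = \nabla_{S^d}\rho\cdot\left(\cc{Y_k^n}\,\nabla_{S^d}Y_k^m\right)$, viewing $V\coloneqq \cc{Y_k^n}\,\nabla_{S^d}Y_k^m$ as a (complex) tangent vector field on $S^d$. Applying the divergence theorem on $S^d$, which has no boundary, gives
\begin{equation*}
\int_{S^d}\left(\nabla_{S^d}\rho\cdot\nabla_{S^d}Y_k^m\right)\cc{Y_k^n}\, d\sigma_d = -\int_{S^d}\rho\,\nabla_{S^d}\cdot\left(\cc{Y_k^n}\,\nabla_{S^d}Y_k^m\right) d\sigma_d.
\end{equation*}
Expanding the surface divergence by the product rule and using $\Delta_{S^d}Y_k^m = -k(k + d - 1)Y_k^m$ from \eqref{eq:SH_eigs}, the inner divergence becomes $\nabla_{S^d}\cc{Y_k^n}\cdot\nabla_{S^d}Y_k^m - k(k + d - 1)Y_k^m\cc{Y_k^n}$. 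Substituting this back and negating (to match the sign in \eqref{eq:Oeps_matrix}) yields a contribution $\int_{S^d}\rho\left(\nabla_{S^d}Y_k^m\cdot\nabla_{S^d}\cc{Y_k^n} - k(k + d - 1)Y_k^m\cc{Y_k^n}\right)d\sigma_d$ for the second term.

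Finally, I would combine this with the first integral and merge the two $Y_k^m\cc{Y_k^n}$ terms, whose coefficients add to $-k - k(k + d - 1) = -k(k + d)$, producing exactly \eqref{eq:Oeps_matrix1}. The only genuinely technical point is justifying the integration by parts: one must confirm that $S^d$ contributes no boundary term and that $\rho$ has enough regularity (a weak gradient suffices, since the vector field $\cc{Y_k^n}\nabla_{S^d}Y_k^m$ is smooth), so the divergence theorem applies to the real scalar $\rho$ paired against the complex field $V$. Everything else is a routine application of the product rule and the known eigenvalue of $\Delta_{S^d}$, so I expect no serious obstacle beyond bookkeeping the sign and the coefficient arithmetic.
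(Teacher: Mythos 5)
Your proof is correct, and it reaches \eqref{eq:Oeps_matrix1} by the same underlying mechanism as the paper --- move the derivative off $\rho$ by integration by parts, apply the product rule, invoke the eigenvalue relation \eqref{eq:SH_eigs}, and combine $-k - k(k+d-1) = -k(k+d)$ --- but your execution is genuinely different in one respect: you integrate by parts intrinsically, invoking the divergence theorem for the tangent field $V = \cc{Y_k^n}\,\nabla_{S^d}Y_k^m$ on the closed manifold $S^d$, whereas the paper works coordinate-by-coordinate in hyperspherical coordinates. Concretely, the paper splits the gradient term into the $d$ integrals $\int_{S^d}\eta_j^{-2}\,\del_j\rho\,\del_jY_k^m\,\cc{Y_k^n}\,d\sigma_d$, integrates each in $\theta_j$ alone (checking that boundary terms vanish by $2\pi$-periodicity for $j=1$ and by $\sin^{j-1}(0)=\sin^{j-1}(\pi)=0$ for $j\ge 2$), and then reassembles $\nabla_{S^d}$ and $\Delta_{S^d}$ from their coordinate expressions \eqref{eq:SCgrad} and \eqref{eq:SCbeltrami}. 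Your route is shorter and sidesteps the coordinate-chart bookkeeping and the boundary/singularity checks entirely, at the cost of importing the divergence theorem on a closed Riemannian manifold as a black box (which is harmless here, since $V$ is smooth and $\rho\in C^1(S^d)$ has an honest weak gradient). The paper's coordinate computation, while heavier, stays entirely within the machinery it sets up in Section \ref{sec:Coordinates}, and its intermediate identity \eqref{eq:Oeps_matrix1b} is reused verbatim later in the proof of Lemma \ref{thm:Matrix_triple}, which is a structural reason for doing it the explicit way.
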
 
\begin{proof} 
From Theorem \ref{thm:Oeps_matrix} and \eqref{eq:SCgrad}, we have 
\begin{equation} \label{eq:Oeps_matrix1a} 
M_{m, n}^{(d, k)} = -\int_{S^d} k\rho Y_k^m\cc{Y_k^n}\, d\sigma_d - \sum_{j = 1}^d \int_{S^d} \frac{\del_j\rho}{\eta_j^2}\del_jY_k^m\cdot \cc{Y_k^n}\, d\sigma_d.
\end{equation} 
For each integral from the sum above, we integrate by parts with respect to $\theta_j$. Note crucially that $\eta_j$ is independent of $\theta_j$. For the case $j = 1$, the determinant of the Jacobian in $d\sigma_d$ is independent of $\theta_1$ and the boundary term vanishes due to the $2\pi$-periodicity of $\rho, \del_1Y_k^m, Y_k^n$ with respect to $\theta_1$. This yields 
\begin{align*} 
-\int_{S^d} \frac{\del_1\rho}{\eta_1^2}\del_1Y_k^m\cdot \cc{Y_k^n}\, d\sigma_d & = \int_{S^d} \frac{\rho}{\eta_1^2}\del_1\left(\del_1Y_k^m\cdot \cc{Y_k^n}\right)d\sigma_d \\ 
& = \int_{S^d} \frac{\rho}{\eta_1^2}\Big[\del_1Y_k^m\cdot \del_1\cc{Y_k^n} + \del_1^2Y_k^m\cdot \cc{Y_k^n}\Big]\, d\sigma_d. 
\end{align*} 
For the case $j = 2, 3, \dots, d$, the boundary term vanishes because $\sin^{j - 1}(0) = \sin^{j - 1}(\pi) = 0$ for $j\ge 2$. This yields 
\begin{align*} 
& -\int_{S^{d - 1}}\int_{\theta_j} \frac{\del_j\rho}{\eta_j^2}\del_jY_k^m\cdot \cc{Y_k^n}\sin^{j - 1}(\theta_j)\, d\theta_j\, d\sigma_{d - 1, j} \\ 
& = \int_{S^{d - 1}}\int_{\theta_j} \frac{\rho}{\eta_j^2}\del_j\Big(\del_jY_k^m\cdot \cc{Y_k^n}\sin^{j - 1}(\theta_j)\Big) d\theta_j\, d\theta_{d - 1, j} \\ 
& = \int_{S^d} \frac{\rho}{\eta_j^2} \left[\del_jY_k^m\cdot \del_j\cc{Y_k^n} + \frac{\del_j\left(\sin^{j - 1}(\theta_j)\del_jY_k^m\right)}{\sin^{j - 1}(\theta_j)}\, \cc{Y_k^n}\right] d\sigma_d.  
\end{align*} 
Summing over all $j = 1, 2, \dots, d$ and recalling the gradient \eqref{eq:SCgrad} and the spherical Laplacian \eqref{eq:SCbeltrami} on $S^d$, we obtain 
\begin{gather} \label{eq:Oeps_matrix1b} 
\begin{aligned} 
& -\sum_{j = 1}^d \int_{S^d} \frac{\del_j\rho}{\eta_j^2}\del_jY_k^m\cdot \cc{Y_k^n}\, d\sigma_d \\ 
& = \sum_{j = 1}^d \int_{S^d} \rho\frac{\del_jY_k^m}{\eta_j}\cdot \frac{\del_j\cc{Y_k^n}}{\eta_j}\, d\sigma_d + \sum_{j = 1}^d \int_{S^d} \rho \left(\frac{\del_j\left(\sin^{j - 1}(\theta_j)\del_jY_k^m\right)}{\eta_j^2\sin^{j - 1}(\theta_j)}\right) \cc{Y_k^n}\, d\sigma_d \\ 
& = \int_{S^d} \rho\nabla_{S^d}Y_k^m\cdot \nabla_{S^d}\cc{Y_k^n}\, d\sigma_d + \int_{S^d} \rho\left(\Delta_{S^d}Y_k^m\right)\cc{Y_k^n}\, d\sigma_d \\ 
& \stackrel{\eqref{eq:SH_eigs}}{=} \int_{S^d} \rho\nabla_{S^d}Y_k^m\cdot \nabla_{S^d}\cc{Y_k^n}\, d\sigma_d - \int_{S^d} k(k + d - 1)\rho Y_k^m\cc{Y_k^n}\, d\sigma_d. 
\end{aligned} 
\end{gather} 
Substituting \eqref{eq:Oeps_matrix1b} into \eqref{eq:Oeps_matrix1a} and rearranging gives the desired expression \eqref{eq:Oeps_matrix1} for $M_{m, n}^{(d, k)}$. 
\end{proof}

We are now ready to prove that the trace of $M^{(d, k)}$ is proportional to the mean of $\rho$. 

\begin{lemma} \label{thm:Matrix_trace} 
Given $d\ge 3$ and $k\in\Z^+$, let $M^{(d, k)}$ and $\rho$ be the Hermitian matrix and the perturbation function defined in Theorem \ref{thm:Oeps_matrix} and \eqref{eq:Domain}, respectively. The trace of $M^{(d, k)}$ is given by 
\begin{equation*} 
\trace{M^{(d, k)}} = -\frac{k N(d, k)}{\abs{S^d}}\int_{S^d} \rho \, d\sigma_d = -\frac{kA_{0, 1}N(d, k)}{\abs{S^d}^{1/2}}. 
\end{equation*} 
\end{lemma}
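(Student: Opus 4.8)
The plan is to compute the trace directly from the closed form for the matrix entries in Lemma~\ref{thm:Oeps_matrix1}. Setting $n = m$ and summing over $1\le m\le N(d, k)$, then interchanging the (finite) sum with the integral, gives
\begin{equation*}
\trace{M^{(d, k)}} = \int_{S^d} \rho(\hat\theta)\left(-k(k + d)\sum_{m = 1}^{N(d, k)}\abs{Y_k^m}^2 + \sum_{m = 1}^{N(d, k)}\abs{\nabla_{S^d}Y_k^m}^2\right) d\sigma_d.
\end{equation*}
The key observation is that both $m$-sums in the integrand are \emph{constant} on $S^d$, so the integrand is a scalar multiple of $\rho$ and the trace is immediately proportional to $\int_{S^d}\rho\, d\sigma_d$.

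For the first sum I would quote the addition theorem \eqref{eq:SH_addition_same}, which gives $\sum_m\abs{Y_k^m}^2 = N(d, k)/\abs{S^d}$. For the second sum I would expand the surface gradient via \eqref{eq:SCgrad} as $\abs{\nabla_{S^d}Y_k^m}^2 = \sum_{j = 1}^d \eta_j^{-2}\abs{\del_jY_k^m}^2$, and then apply Theorem~\ref{thm:SH_addition_grad} to each $j$. Since the right-hand side there is independent of both $j$ and $\hat\theta$, summing over the $d$ angular directions yields
\begin{equation*}
\sum_{m = 1}^{N(d, k)}\abs{\nabla_{S^d}Y_k^m}^2 = d(d - 1)K(d, k)C_{k - 1}^{\left(\frac{d + 1}{2}\right)}(1),
\end{equation*}
a constant, as claimed.

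The one genuine computation --- and the step I expect to be the main obstacle --- is to simplify this Gegenbauer constant into a form that combines cleanly with the $-k(k + d)N(d, k)/\abs{S^d}$ term. A clean way to sidestep the Pochhammer bookkeeping is to note that, since $\sum_m\abs{\nabla_{S^d}Y_k^m}^2$ is \emph{already known} to be constant on $S^d$, it must equal its own average: integrating \eqref{eq:SH_eigs_grad} with $n = m$ and summing over $m$ gives $\int_{S^d}\sum_m\abs{\nabla_{S^d}Y_k^m}^2\, d\sigma_d = k(k + d - 1)N(d, k)$, so the constant is exactly $k(k + d - 1)N(d, k)/\abs{S^d}$. (Equivalently, one verifies $d(d - 1)K(d, k)C_{k - 1}^{\left(\frac{d + 1}{2}\right)}(1) = k(k + d - 1)N(d, k)/\abs{S^d}$ directly from $C_n^{(\alpha)}(1) = (2\alpha)_n/n!$ and the definition of $K(d, k)$.) Combining the two constants, the coefficient of $\rho$ in the integrand collapses to $\big(-k(k + d) + k(k + d - 1)\big)N(d, k)/\abs{S^d} = -kN(d, k)/\abs{S^d}$, which yields the first stated equality. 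The second equality then follows at once from the mean computation $\int_{S^d}\rho\, d\sigma_d = A_{0, 1}\abs{S^d}^{1/2}$ established before \eqref{eq:Asymp_volume}.
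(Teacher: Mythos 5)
Your proposal is correct, and its overall architecture is the same as the paper's: both start from the symmetrized form of the entries in Lemma~\ref{thm:Oeps_matrix1}, sum the diagonal, and invoke the addition theorem \eqref{eq:SH_addition_same} together with Theorem~\ref{thm:SH_addition_grad} (summed over the $d$ angular directions) to see that the integrand is a constant multiple of $\rho$. Where you genuinely diverge is the evaluation of that constant. The paper computes $d(d-1)K(d,k)\,C_{k-1}^{\left(\frac{d+1}{2}\right)}(1)$ explicitly, using $C_n^{(\alpha)}(1) = (2\alpha)_n/n!$ and Gamma-function identities to show $d(d-1)\,C_{k-1}^{\left(\frac{d+1}{2}\right)}(1)/C_k^{\left(\frac{d-1}{2}\right)}(1) = k(k+d-1)$. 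You instead observe that a function already known to be constant on $S^d$ equals its own mean, and compute that mean from the integral identity \eqref{eq:SH_eigs_grad}, obtaining $k(k+d-1)N(d,k)/\abs{S^d}$ with no special-function bookkeeping. This averaging step is a clean improvement: it is more elementary, uses only identities already established in Section~\ref{sec:prelim}, and would generalize to any situation where constancy is known a priori. What the paper's route buys in exchange is the closed-form value of the Gegenbauer ratio itself (equivalently, an independent verification of Theorem~\ref{thm:SH_addition_grad}'s normalization), which your argument deliberately bypasses; note that your shortcut still genuinely requires Theorem~\ref{thm:SH_addition_grad} for the constancy, so neither route avoids that lemma. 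Both proofs finish identically via $\int_{S^d}\rho\, d\sigma_d = A_{0,1}\abs{S^d}^{1/2}$.
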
 
\begin{proof} 
From Lemma \eqref{thm:Oeps_matrix1}, we have 
\begin{equation*} 
\trace{M^{(d, k)}} = \sum_{m = 1}^{N(d, k)} M_{m ,m}^{(k)} = \sum_{m = 1}^{N(d, k)} \int_{S^d} \rho\Big(-k(k + d)Y_k^m\cc{Y_k^m} + \nabla_{S^d}Y_k^m\cdot\nabla_{S^d}\cc{Y_k^m}\Big)\, d\sigma_d. 
\end{equation*} 
The lemma is an direct consequence of the addition theorem for hyperspherical harmonics and its gradient; see \eqref{eq:SH_addition}, \eqref{eq:SH_addition_same}, and Theorem \ref{thm:SH_addition_grad}. Indeed, 
\begin{align*} 
\trace{M^{(d, k)}} & = \int_{S^d} \rho\left(-\frac{k(k + d)N(d, k)}{\abs{S^d}} + d(d - 1)K(d, k)C_{\ell - 1}^{\left(\frac{d + 1}{2}\right)}(1)\right) d\sigma_d \\ 
& = -\frac{N(d, k)}{\abs{S^d}}\left[-k(k + d) + d(d - 1)\frac{C_{k - 1}^{\left(\frac{d + 1}{2}\right)}(1)}{C_k^{\left(\frac{d - 1}{2}\right)}(1)}\right] \int_{S^d}\rho\, d\sigma_d. 
\end{align*}  
We need only show the expression in the bracket above is equal to $k$. From \cite[Table.~18.6.1]{NIST} and the definition of Pochhammer's symbol \cite[Eq.~5.2.5]{NIST}, we have that 
\begin{equation*} 
C_n^{(\alpha)}(1) = \frac{(2\alpha)_n}{n!} = \frac{\Gamma(2\alpha + n)}{n!\Gamma(2\alpha)}. 
\end{equation*} 
Consequently, 
\begin{align*} 
d(d - 1)\cdot \frac{C_{k - 1}^{\left(\frac{d + 1}{2}\right)}(1)}{C_k^{\left(\frac{d - 1}{2}\right)}(1)} & = \frac{\Gamma(d + k)}{\Gamma(d + 1)(k - 1)!}\cdot \frac{d(d - 1)\Gamma(d - 1)k!}{\Gamma(d + k - 1)} = k(d + k - 1),  
\end{align*} 
where we use the fact that $\Gamma(z + 1) = z\Gamma(z)$ for any positive integer $z$. The claim now follows. 
\end{proof}

\begin{corollary} \label{thm:Matrix_sum} 
Given $d\ge 3$ and $k\in\Z^+$, let $M^{(d, k)}$ and $\rho$ be the Hermitian matrix and the perturbation function defined in Theorem \ref{thm:Oeps_matrix} and \eqref{eq:Domain}, respectively. We have 
\begin{equation} 
M^{(d, k)} = -\frac{kA_{0, 1}}{\abs{S^d}^{1/2}}I_{N(d, k)} + E^{(d, k)}, 
\end{equation} 
where $I_{N(d, k)}$ is the identity matrix of size $N(d, k)$ and $E^{(d, k)}$ is a Hermitian, zero-trace matrix, whose entries are given by 
\begin{equation*} 
E_{m, n}^{(d, k)} = \sum_{p = 1}^\infty\sum_{q = 1}^{N(d, p)} \int_{S^d} A_{p, q}Y_{p, q}\left(-k(k + d)Y_k^m(\hat\theta)\cc{Y_k^n(\hat\theta)} + \nabla_{S^d}Y_k^m(\hat\theta)\cdot\nabla_{S^d}\cc{Y_k^n(\hat\theta)}\right) d\sigma_d.
\end{equation*} 
\end{corollary}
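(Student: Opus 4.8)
The plan is to expand the perturbation function $\rho$ in the real hyperspherical harmonic basis and split off its constant ($p = 0$) term, which will supply the scalar multiple of the identity, leaving the higher-frequency ($p \ge 1$) terms to constitute $E^{(d, k)}$. Concretely, I would start from the expression \eqref{eq:Oeps_matrix1} for $M_{m, n}^{(d, k)}$ furnished by Lemma \ref{thm:Oeps_matrix1}, insert the expansion $\rho = \sum_{p = 0}^\infty \sum_{q = 1}^{N(d, p)} A_{p, q} Y_{p, q}$ from \eqref{eq:Domain}, and use linearity of the integral to isolate the $p = 0$ summand from the remainder.

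For the $p = 0$ contribution, the key observation is that $Y_{0, 1} = \abs{S^d}^{-1/2}$ is constant (by the convention recorded in the excerpt) and hence factors out of the integral. The two surviving integrals are $\int_{S^d} Y_k^m \cc{Y_k^n}\, d\sigma_d = \delta_{m, n}$ by orthonormality of the hyperspherical harmonics and $\int_{S^d} \nabla_{S^d} Y_k^m \cdot \nabla_{S^d} \cc{Y_k^n}\, d\sigma_d = k(k + d - 1)\delta_{m, n}$ by the integral identity \eqref{eq:SH_eigs_grad}. Substituting these, the $p = 0$ contribution to $M_{m, n}^{(d, k)}$ collapses to $A_{0, 1}\abs{S^d}^{-1/2}\bigl(-k(k + d) + k(k + d - 1)\bigr)\delta_{m, n} = -k A_{0, 1}\abs{S^d}^{-1/2}\delta_{m, n}$, which is exactly the $(m, n)$ entry of $-\tfrac{k A_{0, 1}}{\abs{S^d}^{1/2}} I_{N(d, k)}$. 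Defining $E^{(d, k)}$ to be the residual $p \ge 1$ part then yields both the stated formula for its entries and the decomposition $M^{(d, k)} = -\tfrac{k A_{0, 1}}{\abs{S^d}^{1/2}} I_{N(d, k)} + E^{(d, k)}$ by construction.

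It then remains to verify that $E^{(d, k)}$ is Hermitian and trace-free, both of which are immediate. Hermiticity is inherited: $M^{(d, k)}$ is Hermitian by Theorem \ref{thm:Oeps_matrix} and the scalar multiple of the identity is Hermitian, so their difference $E^{(d, k)}$ is Hermitian as well. For the trace, I would simply invoke Lemma \ref{thm:Matrix_trace}, which gives $\trace{M^{(d, k)}} = -k A_{0, 1} N(d, k)\abs{S^d}^{-1/2}$; since the scalar matrix has trace $-k A_{0, 1} N(d, k)\abs{S^d}^{-1/2}$ too, linearity of the trace forces $\trace{E^{(d, k)}} = 0$. (Alternatively one could compute $\trace{E^{(d, k)}}$ directly via the addition theorems and use $\int_{S^d} Y_{p, q}\, d\sigma_d = 0$ for every $p \ge 1$.)

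The argument is essentially a bookkeeping corollary of the preceding lemmas, and the only point deserving genuine care is the $p = 0$ evaluation. One might naively expect the constant term to enter through the potential piece $-k(k + d)$ alone, since $\nabla_{S^d} Y_{0, 1} = 0$; the subtlety is that it is $\nabla_{S^d} Y_k^m$, not $\nabla_{S^d} Y_{0, 1}$, that appears in the gradient integral, and \eqref{eq:SH_eigs_grad} shows this piece contributes $+k(k + d - 1)$. The exact cancellation down to $-k$ is precisely what makes the scalar in the decomposition equal to $-k A_{0, 1}\abs{S^d}^{-1/2}$ rather than $-k(k + d) A_{0, 1}\abs{S^d}^{-1/2}$, so this is the one spot where I would be attentive to keeping both terms.
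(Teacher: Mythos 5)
Your proposal is correct and follows essentially the same route as the paper's proof: substitute the expansion of $\rho$ into \eqref{eq:Oeps_matrix1}, split off the $p=0$ term, evaluate it via orthonormality and \eqref{eq:SH_eigs_grad} to get the exact cancellation $-k(k+d)+k(k+d-1)=-k$, and obtain zero trace of $E^{(d,k)}$ from Lemma \ref{thm:Matrix_trace} together with linearity of the trace. No gaps; your closing remark about not forgetting the gradient term's $+k(k+d-1)$ contribution is precisely the computation the paper performs.
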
 
\begin{proof} 
We begin by substituting the expression for $\rho$ (see \eqref{eq:Domain}) into \eqref{eq:Oeps_matrix1} to obtain 
\begin{equation*} 
M_{m, n}^{(k)} = \sum_{p = 0}^\infty\sum_{q = 1}^{N(d, p)} \int_{S^d} A_{p, q}Y_{p, q}\left(-k(k + d)Y_k^m(\hat\theta)\cc{Y_k^n(\hat\theta)} + \nabla_{S^d}Y_k^m(\hat\theta)\cdot\nabla_{S^d}\cc{Y_k^n(\hat\theta)}\right) d\sigma_d. 
\end{equation*} 
Separating the infinite sum into $p = 0$ and $p > 0$, we may write $M_{m, n}^{(k)} = D_{m ,n}^{(d, k)} + E_{m, n}^{(k)}$, where $E_{m, n}^{(k)}$ has the desired expression and 
\begin{equation*} 
D_{m, n}^{(d, k)} = \int_{S^d} A_{0, 1}Y_{0, 1}\left(-k(k + d)Y_k^m(\hat\theta)\cc{Y_k^n(\hat\theta)} + \nabla_{S^d}Y_k^m(\hat\theta)\cdot\nabla_{S^d}\cc{Y_k^n(\hat\theta)}\right) d\sigma_d. 
\end{equation*} 
Using the integral identity \eqref{eq:SH_eigs_grad} and the orthonormality of the hyperspherical harmonics, we deduce that the matrix $D^{(d, k)}$ is diagonal. Moreover, 
\begin{align*} 
D_{m, m}^{(d, k)} & = A_{0, 1}Y_{0, 1}\Big(-k(k + d) + k(k + d - 1)\Big) = -\frac{kA_{0, 1}}{\abs{S^d}^{1/2}}, \ \ 1\le m\le N(d, k), 
\end{align*} 
as desired. Thanks to Lemma \ref{thm:Matrix_trace}, we see that $E^{(d, k)}$ has zero trace since the trace is linear. 
\end{proof} 

We are now ready to prove Theorem \ref{thm:Iso_stat}. 

\begin{proof}[Proof of Theorem \ref{thm:Iso_stat}]  
We recall the volume-normalised Steklov eigenvalue 
\begin{equation} 
\Lambda_{k, j}(\Omega_\vareps)= \lambda_{k, j}^{\vareps}\abs{\Omega_\vareps}^{\frac{1}{d + 1}}. 
\end{equation} 
Substituting the ansatz \eqref{eq:Pert_ansatz1} for $\lambda_{k, j}^\vareps$ and the asymptotic expansion \eqref{eq:Asymp_volume} for $\abs{\Omega_\vareps}^{\frac{1}{d + 1}}$, we obtain 
\begin{align*} 
\Lambda_{k, j}(\Omega_\vareps) & = \left(k + \vareps\lambda_{k, j}^{(1)} + O(\vareps^2)\right)\left(\abs{B}^{\frac{1}{d + 1}} + \vareps\left(\frac{A_{0, 1}\abs{B}^{\frac{1}{d + 1}}}{\abs{S^d}^{1/2}}\right) + O(\vareps^2)\right) \\ 
& = k\abs{B}^{\frac{1}{d + 1}} + \vareps\left(\lambda_{k, j}^{(1)}\abs{B}^{\frac{1}{d + 1}} + \frac{kA_{0, 1}\abs{B}^{\frac{1}{d + 1}}}{\abs{S^d}^{1/2}}\right) + O(\vareps^2). 
\end{align*} 
From Corollary \ref{thm:Matrix_sum}, we have that 
\begin{equation*} 
\lambda_{k, j}^{(1)} = -\frac{kA_{0, 1}}{\abs{S^d}^{1/2}} + e_{k, j}, 
\end{equation*} 
where $e_{k, j}$ is the $j$th eigenvalue (in increasing order) of the matrix $E^{(d, k)}$ which is real. It follows that 
\begin{align*} 
\Lambda_{k, j}(\Omega_\vareps) = k\abs{B}^{\frac{1}{d + 1}} + \vareps\left(e_{k, j}\abs{B}^{\frac{1}{d + 1}}\right) + O(\vareps^2). 
\end{align*} 
Since $E^{(d, k)}$ is Hermitian with zero trace, either $e_{k, j} = 0$ for all $j = 1, 2, \dots, N(d, k)$ or $e_{k, 1} < 0$. Together, we see that $e_{k, 1}\le 0$ and this completes the proof since $\Lambda_{k, 1} = \Lambda_{1 + N_{d, k}}$. 
\end{proof}


\section{$M^{(d, k)}$ and the Wigner $3j$-symbols} \label{sec:Wigner} 
To prove Theorem \ref{thm:Iso_notball}, it suffices to find a perturbation function $\rho$ such that the corresponding matrix $M^{(d, k)}$ has at least one positive eigenvalue. The first step is to express $M^{(d, k)}$ in terms of the integral of the triple product of hyperspherical harmonics.

\begin{lemma} \label{thm:Matrix_triple}
Given $d\ge 3$ and $k\in\Z^+$, let $M^{(d, k)}$ and $\rho$ be the Hermitian matrix and the perturbation function defined in Theorem \ref{thm:Oeps_matrix} and \eqref{eq:Domain}, respectively. Then the entries of $M^{(d, k)}$ can be written as 
\begin{equation} \label{eq:Matrix_triple} 
M_{m ,n}^{(d, k)} = -\frac{1}{2}\sum_{p = 0}^\infty\sum_{q = 1}^{N(d, k)} A_{p, q}\Big(p(p + d - 1) + 2k\Big)W_{q, m, n}^{p, k}, 
\end{equation} 
where 
\begin{equation*} 
W_{q, m, n}^{p, k} = \int_{S^d} Y_{p, q}(\hat\theta)Y_k^m(\hat\theta)\cc{Y_k^n(\hat\theta)}\, d\sigma_d. 
\end{equation*} 
\end{lemma}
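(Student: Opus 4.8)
The plan is to start from the expression for $M_{m,n}^{(d,k)}$ established in Lemma \ref{thm:Oeps_matrix1}, namely
\begin{equation*}
M_{m,n}^{(d,k)} = \int_{S^d}\rho\left(-k(k+d)Y_k^m\cc{Y_k^n} + \nabla_{S^d}Y_k^m\cdot\nabla_{S^d}\cc{Y_k^n}\right)d\sigma_d,
\end{equation*}
and to substitute the hyperspherical-harmonic expansion of $\rho$ from \eqref{eq:Domain}. By linearity of the integral, it then suffices to evaluate, for each fixed pair $(p,q)$, the two integrals $\int_{S^d} Y_{p,q}Y_k^m\cc{Y_k^n}\,d\sigma_d = W_{q,m,n}^{p,k}$ and $\int_{S^d}Y_{p,q}\nabla_{S^d}Y_k^m\cdot\nabla_{S^d}\cc{Y_k^n}\,d\sigma_d$. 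The first is already in the desired form, so the entire task reduces to rewriting the gradient-dot-product integral in terms of $W_{q,m,n}^{p,k}$.

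The central device is the Leibniz rule for the Laplace--Beltrami operator on $S^d$: for smooth (possibly complex-valued) functions $f,g$ one has $\Delta_{S^d}(fg) = f\Delta_{S^d}g + g\Delta_{S^d}f + 2\nabla_{S^d}f\cdot\nabla_{S^d}g$. Applying this with $f = Y_k^m$ and $g = \cc{Y_k^n}$, and invoking the eigenfunction relation \eqref{eq:SH_eigs} (together with its complex conjugate, valid since the eigenvalue $-k(k+d-1)$ is real), I would obtain
\begin{equation*}
\nabla_{S^d}Y_k^m\cdot\nabla_{S^d}\cc{Y_k^n} = \tfrac12\Delta_{S^d}\!\left(Y_k^m\cc{Y_k^n}\right) + k(k+d-1)Y_k^m\cc{Y_k^n}.
\end{equation*}
Multiplying by $Y_{p,q}$ and integrating, the Laplacian term is handled by the self-adjointness of $\Delta_{S^d}$ on the closed manifold $S^d$ (no boundary terms), which transfers the operator onto $Y_{p,q}$ and, via \eqref{eq:SH_eigs} for degree $p$, produces a factor $-p(p+d-1)$. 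This yields
\begin{equation*}
\int_{S^d}Y_{p,q}\nabla_{S^d}Y_k^m\cdot\nabla_{S^d}\cc{Y_k^n}\,d\sigma_d = \left(-\tfrac12 p(p+d-1) + k(k+d-1)\right)W_{q,m,n}^{p,k}.
\end{equation*}

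Finally I would assemble the coefficient of $W_{q,m,n}^{p,k}$ by combining the $-k(k+d)$ from the first term with $-\tfrac12 p(p+d-1) + k(k+d-1)$ from the second. Since $-k(k+d)+k(k+d-1) = -k$, the total coefficient collapses to $-k - \tfrac12 p(p+d-1) = -\tfrac12\left(p(p+d-1)+2k\right)$, which is precisely \eqref{eq:Matrix_triple}. I do not expect a genuine obstacle here: the manipulation is purely algebraic once the Leibniz identity and self-adjointness are in place. The one point requiring care is the bookkeeping of the two quadratic-in-degree eigenvalue factors and their signs, so that the cancellation $-k(k+d)+k(k+d-1)=-k$ is carried out correctly, and the (immediate) observation that integration by parts on the closed manifold $S^d$ contributes no boundary term.
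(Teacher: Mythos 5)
Your proof is correct, and it reaches the result by a genuinely different mechanism than the paper's. The paper does not invoke Lemma \ref{thm:Oeps_matrix1} together with manifold-level identities; instead it works in coordinates directly from \eqref{eq:Oeps_matrix}: it integrates by parts in each $\theta_j$ so as to move the derivative off $Y_k^m$, notes that the symmetry of \eqref{eq:Oeps_matrix1b} under exchanging the roles of $Y_k^m$ and $\cc{Y_k^n}$ gives a second integration-by-parts identity, and \emph{averages} the two to obtain the key intermediate step
\begin{equation*}
-\sum_{j = 1}^d \int_{S^d} \frac{\del_j\rho}{\eta_j^2}\left(\del_jY_k^m\right)\cc{Y_k^n}\, d\sigma_d = \frac{1}{2}\int_{S^d} \left(\Delta_{S^d}\rho\right) Y_k^m\cc{Y_k^n}\, d\sigma_d,
\end{equation*}
after which it expands $\rho$ and applies \eqref{eq:SH_eigs} to $Y_{p, q}$. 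You arrive at the equivalent intermediate identity $M_{m, n}^{(d, k)} = -\tfrac{1}{2}\int_{S^d}\left(-\Delta_{S^d}\rho + 2k\rho\right)Y_k^m\cc{Y_k^n}\, d\sigma_d$ coordinate-free: the Leibniz rule $\Delta_{S^d}(fg) = f\Delta_{S^d}g + g\Delta_{S^d}f + 2\nabla_{S^d}f\cdot\nabla_{S^d}g$ supplies the factor $\tfrac{1}{2}$, and self-adjointness of $\Delta_{S^d}$ on the closed manifold $S^d$ transfers the Laplacian onto $Y_{p, q}$, producing $-p(p + d - 1)$; your final bookkeeping $-k(k + d) + k(k + d - 1) - \tfrac{1}{2}p(p + d - 1) = -\tfrac{1}{2}\left(p(p + d - 1) + 2k\right)$ is exactly right. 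What each route buys: yours is shorter and avoids the paper's coordinate-by-coordinate treatment of boundary terms ($2\pi$-periodicity in $\theta_1$, vanishing of $\sin^{j - 1}\theta_j$ at the endpoints for $j \ge 2$), while the paper's stays entirely within the explicit hyperspherical machinery it has already built, recycling \eqref{eq:Oeps_matrix1b} rather than appealing to Green's identity and the Leibniz rule on a general closed manifold. Both versions share the same (harmless, and equally implicit) simplifications: term-by-term interchange of the sum over $(p, q)$ with the integral, and the fact that the real harmonic $Y_{p, q}$ satisfies the eigenvalue equation \eqref{eq:SH_eigs}, being a linear combination of complex harmonics of the same degree.
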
 
\begin{proof} 
From Theorem \ref{thm:Oeps_matrix} and \eqref{eq:SCgrad}, we have 
\begin{equation} \label{eq:Matrix_triple1} 
M_{m, n}^{(d, k)} \stackrel{\eqref{eq:Oeps_matrix1}}{=} -\int_{S^d} k\rho Y_k^m\cc{Y_k^n}\, d\sigma_d - \sum_{j = 1}^d\int_{S^d} \frac{\del_j\rho}{\eta_j^2}\del_jY_k^m\cdot \cc{Y_k^n}\, d\sigma_d. 
\end{equation}  
Integrating by parts with respect to $\theta_j$ and noting that $\eta_j$ is independent of $\theta_j$, we have that 
\begin{gather} 
\begin{aligned} \label{eq:Matrix_triple2}
-\sum_{j = 1}^d \int_{S^d} \frac{\del_j\rho}{\eta_j^2}\del_jY_k^m\cdot \cc{Y_k^n}\, d\sigma_d & = -\sum_{j = 1}^d \int_{S^{d - 1}}\int_{\theta_j} \frac{\del_jY_k^m}{\eta_j^2}\Big(\sin^{j- 1}(\theta_j)\del_j\rho\cdot \cc{Y_k^n}\Big)\, d\theta_j\, d\sigma_{d - 1, j} \\ 
& = \sum_{j = 1}^d \int_{S^{d - 1}}\int_{\theta_j} \frac{Y_k^m}{\eta_j^2}\del_j\Big(\sin^{j- 1}(\theta_j)\del_j\rho\cdot \cc{Y_k^n}\Big)\, d\theta_j\, d\sigma_{d - 1, j} \\ 
& = \sum_{j = 1}^d \int_{S^d} \frac{Y_k^m}{\eta_j^2}\left[\del_j\rho\cdot\del_j\cc{Y_k^n} + \frac{\del_j\left(\sin^{j - 1}(\theta_j)\del_j\rho\right)}{\sin^{j - 1}(\theta_j)}\cc{Y_k^n}\right] d\sigma_d \\ 
& \stackrel{\eqref{eq:SCbeltrami}}{=} \left(\sum_{j = 1}^d\int_{S^d} \frac{\del_j\rho}{\eta_j^2} Y_k^m\del_j\cc{Y_k^n}\, d\sigma_d\right) + \int_{S^d} \left(\Delta_{S^d}\rho\right) Y_k^m\cc{Y_k^n}\, d\sigma_d, 
\end{aligned} 
\end{gather} 
where the boundary term vanishes due to (1) $\del_1\rho$ and hyperspherical harmonics are $2\pi$-periodic with respect to $\theta_1$ for $j = 1$, and (2) $\sin^{j - 1}(0) = \sin^{j - 1}(\pi) = 0$ for $j = 2, 3, \dots, d$. On the other hand, we deduce from \eqref{eq:Oeps_matrix1b} from the proof of Theorem \ref{thm:Oeps_matrix} that 
\begin{equation} \label{eq:Matrix_triple3} 
-\sum_{j = 1}^d \int_{S^d} \frac{\del_j\rho}{\eta_j^2}\del_jY_k^m\cdot \cc{Y_k^n}\, d\sigma_d = -\sum_{j = 1}^d\int_{S^d} \frac{\del_j\rho}{\eta_j^2} Y_k^m\del_j\cc{Y_k^n}\, d\sigma_d. 
\end{equation} 
Taking the average of \eqref{eq:Matrix_triple2} and \eqref{eq:Matrix_triple3}, it follows that 
\begin{equation} \label{eq:Matrix_triple4}
-\sum_{j = 1}^d \int_{S^d} \frac{\del_j\rho}{\eta_j^2}\left(\del_jY_k^m\right)\cc{Y_k^n}\, d\sigma_d = \frac{1}{2}\int_{S^d} \left(\Delta_{S^d}\rho\right) Y_k^m\cc{Y_k^n}\, d\sigma_d. 
\end{equation} 
Finally, we substitute \eqref{eq:Matrix_triple4} and the expansion \eqref{eq:Domain} for $\rho$ into \eqref{eq:Matrix_triple1} to obtain 
\begin{align*} 
M_{m, n}^{(d, k)} & = -\frac{1}{2}\int_{S^d} \left(-\Delta_{S^d}\rho + 2k\rho\right)Y_k^m\cc{Y_k^n}\, d\sigma_d \\ 
& = -\frac{1}{2}\sum_{p = 0}^\infty \sum_{q = 0}^{N(d, p)} A_{p, q}\int_{S^d} \left(-\Delta_{S^d}Y_{p, q} + 2kY_{p, q}\right)Y_k^m\cc{Y_k^n}\, d\sigma_d \\ 
& \stackrel{\eqref{eq:SH_eigs}}{=} -\frac{1}{2}\sum_{p = 0}^\infty \sum_{q = 0}^{N(d, p)} \Big(p(p + d - 1) + 2k\Big)\int_{S^d} Y_{p, q}Y_k^m\cc{Y_k^n}\, d\sigma_d, 
\end{align*} 
which gives the desired result. 
\end{proof} 

In order to use Lemma \ref{thm:Matrix_triple}, we require the evaluation of $W^{p, k}_{q, m, n}$. We introduce additional notation that simplifies our presentation in deriving the explicit expression for $W^{p, k}_{q, m, n}$. For any $(d - 1)$-tuples $q, m, n$ satisfying \eqref{eq:SH_tuple} with $q_d = p$, $m_d = n_d = k$, we define the 3-tuple $T_j = (t_j^1, t_j^2, t_j^3)\coloneqq (q_j, m_j, n_j)$ and introduce the following variables for $j = 1, 2, \dots, d$:  
\begin{align*} 
s_j = q_j + m_j + n_j, \qquad \diff_j^i = t_j^i - t_{j - 1}^i, \qquad \nu_j = \frac{j - 1}{2}. 
\end{align*} 
Since the real and complex hyperspherical harmonics are both separable in hyperspherical coordinates (see Section \ref{sec:Harmonics}), it follows that $W^{p, k}_{q, m, n}$ can be written as a product of integrals  
\begin{equation*} \label{eq:Matrix_3j_prod}
W^{p, k}_{q, m, n} = I(T_1, T_2)\prod_{j = 3}^d I(T_{j - 1}, T_j), \ \ 1\le q\le N(d, p), \, 1\le m, n\le N(d, k), 
\end{equation*} 
where the integrals $I(T_1, T_2)$ and $I(T_{j - 1}, T_j)$, $j = 3, 4, \dots, d$, are given by 
\begin{align} 
\notag I(T_1, T_2) & = \int_0^{2\pi}\int_0^{\pi} \widetilde Y_{q_2, q_1}(\phi, \theta_2)\widetilde Y_{m_2}^{m_1}(\phi, \theta_2)\cc{\widetilde Y_{n_2}^{n_1}(\phi, \theta_2)}\sin(\theta_2)\, d\theta_2\, d\phi, \\ 
\notag I(T_{j - 1}, T_j) & = \int_0^{\pi} \left(\prod_{i = 1}^3 Y(\theta_j; t_{j - 1}^i, t_j^i)\right) \sin^{j - 1}(\theta_j)\, d\theta_j \\ 
\notag & = \left(\prod_{i = 1}^3 \mu_j^{(i)}\right)^{-1} \int_0^\pi \left(\prod_{i = 1}^3 C_{\diff_j^i}^{\left(t_{j - 1}^i + \nu_j\right)}(\cos\theta_j)\right) (\sin\theta_j)^{s_{j - 1} + 2\nu_j}\, d\theta_j \\ 
\label{eq:Matrix_3j_int} & = \left(\prod_{i = 1}^3 \mu_j^{(i)}\right)^{-1} \int_{-1}^1 (1 - z^2)^{\frac{s_{j - 1}}{2} + \nu_j - \frac{1}{2}} \prod_{i = 1}^3 C_{\diff_j^i}^{\left(t_{j - 1}^i + \nu_j\right)}(z)\, dz.  
\end{align} 
The constant $\mu_j^{(i)}$ for $i = 1, 2, 3$ and $j = 3, 4, \dots, d$ satisfies 
\begin{align*} 
\left(\mu_j^{(i)}\right)^2 & = \frac{4\pi\Gamma(t_j^i + t_{j - 1}^i + 2\nu_j)}{2^{2t_{j - 1}^i + j}\left(\diff_j^i\right)!\left(t_j^i + \nu_j\right)\Gamma^2(t_{j - 1}^i + \nu_j)}. 
\end{align*}

Our next theorem provides an explicit expression for these $(d - 1)$ integrals above involving the Pochhammer's symbol and the Wigner $3j$-symbol $\mat{j_1}{j_2}{j_3}{m_1}{m_2}{m_3}$; see \cite[Chapter 34]{NIST} for the definition of Wigner $3j$-symbols. A crucial property of the Wigner $3j$-symbol is the following: If $\mat{j_1}{j_2}{j_3}{m_1}{m_2}{m_3}\neq 0$, then all the following \emph{selection rules} must be satisfied: 
\begin{enumerate} 
\item $m_i\in\{-j_i, -j_i + 1, -j_i + 2, \dots, j_i\}$ for $i = 1, 2, 3$. 
\item $m_1 + m_2 + m_3 = 0$. 
\item The triangle conditions $\abs{j_1 - j_2}\le j_3\le j_1 + j_2$. 
\item $(j_1 + j_2 + j_3)\ge 0$ is an integer (and, moreover, an even integer if $m_1 = m_2 = m_3 = 0$). 
\end{enumerate}

\begin{theorem} \label{thm:Matrix_3j} 
Fix $d\ge 3$, $p\in\N$, and $k\in\Z^+$. Write $c_{T_2} = \sqrt{\frac{(2q_2 + 1)(2m_2 + 1)(2n_2 + 1)}{4\pi}}$ and $Q(q_1) = \mat{q_2}{m_2}{n_2}{q_1}{m_1}{-n_1}$. We have $I(T_1, T_2) = c_{T_2}\mat{q_2}{m_2}{n_2}{0}{0}{0}Q(T_1, T_2)$, where 
\begin{equation*} 
Q(T_1, T_2) = \begin{dcases} 
\, (-1)^{m_1}\delta_{m_1, n_1}\mat{q_2}{m_2}{n_2}{0}{m_1}{-m_1} & \ \ \tn{ if } q_1 = 0, \\ 
\, \frac{(-1)^{n_1}}{\sqrt{2}}\Big[Q(-q_1) + (-1)^{q_1}Q(q_1)\Big] & \ \ \tn{ if } q_1 > 0, \\ 
\, \frac{i(-1)^{n_1}}{\sqrt{2}}\Big[Q(q_1) - (-1)^{q_1}Q(-q_1)\Big] & \ \ \tn{ if } q_1 < 0.
\end{dcases} 
\end{equation*} 
For $j = 3, 4, \dots, d$, we have $I(T_{j - 1}, T_j) = \left(\mu_j^{(1)}\mu_j^{(2)}\mu_j^{(3)}\right)^{-1}H(T_{j - 1}, T_j)$, where 
\begin{align*} 
H(T_{j - 1}, T_j) & = \sum_{\substack{0\le \ell_1\le \lfloor \diff_j^1/2\rfloor \\ 0\le \ell_2\le \lfloor \diff_j^2/2\rfloor \\ 0\le \ell_3\le \lfloor \diff_j^3/2\rfloor}} \prod_{i = 1}^3 V(\diff_j^i, t_{j - 1}^i + \nu_j, \ell_i)\sum_{\substack{\tau_1, \tau_2\in\N \\\ \tau_2 \tn{ even}}} (2\tau_1 + 1)(2\tau_2 + 1) L(s_{j - 1}, \nu_j, \tau_2) \\ 
& \hspace{2.5cm} \times \mat{\diff_j^2 - 2\ell_2}{\diff_j^3 - 2\ell_3}{\tau_1}{0}{0}{0}^2\mat{\diff_j^1 - 2\ell_1}{\tau_1}{\tau_2}{0}{0}{0}^2.  
\end{align*} 
The constants $V$ and $L$ are defined by 
\begin{align*} 
V(\beta, \alpha, \ell) & = (1 + 2\beta - 4\ell)\frac{(\alpha)_{\beta - \ell}}{\left(\frac{3}{2}\right)_{\beta - \ell}}\frac{(\alpha - \frac{1}{2})_\ell}{\ell!}, \\ 
L(s_{j - 1}, \nu_j, \tau_2) & = \frac{\pi\Gamma^2\left(\frac{s_{j - 1}}{2} + \nu_j + \frac{1}{2}\right)}{\Gamma\left(\frac{s_{j - 1}}{2} + \nu_j + 1 + \frac{\tau_2}{2}\right)\Gamma\left(\frac{s_{j - 1}}{2} + \nu_j + \frac{1}{2} - \frac{\tau_2}{2}\right)\Gamma\left(\frac{\tau_2}{2} + 1\right)\Gamma\left(-\frac{\tau_2}{2} + \frac{1}{2}\right)}. 
\end{align*} 
Here, $\lfloor\cdot \rfloor$ and $(\alpha)_n = \frac{\Gamma(\alpha + n)}{\Gamma(\alpha)}$ denote the floor function and the Pochhammer's symbol, respectively. 
\end{theorem}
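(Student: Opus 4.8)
The proof splits along the two families of one-dimensional integrals, and the plan is to dispatch each with a different classical tool: the Gaunt formula for $I(T_1, T_2)$, and a three-stage reduction to Legendre polynomials for $I(T_{j-1}, T_j)$.

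For $I(T_1, T_2)$, I would first rewrite the conjugated factor using $\cc{\widetilde Y_{n_2}^{n_1}} = (-1)^{n_1}\widetilde Y_{n_2}^{-n_1}$ and expand the \emph{real} harmonic $\widetilde Y_{q_2, q_1}$ into the complex harmonics $\widetilde Y_{q_2}^{\pm q_1}$ using the real-to-complex conversion of Section \ref{sec:Harmonics}, treating the three sign regimes of $q_1$ separately. Each resulting summand is then an integral of a product of three complex spherical harmonics, evaluated by the Gaunt formula
\[
\int_{S^2}\widetilde Y_{a_1}^{b_1}\widetilde Y_{a_2}^{b_2}\widetilde Y_{a_3}^{b_3}\,\sin\theta_2\,d\theta_2\,d\phi = \sqrt{\frac{(2a_1+1)(2a_2+1)(2a_3+1)}{4\pi}}\mat{a_1}{a_2}{a_3}{0}{0}{0}\mat{a_1}{a_2}{a_3}{b_1}{b_2}{b_3}.
\]
Taking $(a_1,a_2,a_3) = (q_2,m_2,n_2)$, the common prefactor is exactly $c_{T_2}\mat{q_2}{m_2}{n_2}{0}{0}{0}$, while the bottom-row symbols assemble into $Q(T_1,T_2)$ in each regime; in the case $q_1 = 0$ the selection rule $b_1+b_2+b_3 = 0$ forces $m_1 = n_1$, which is the source of the Kronecker delta.

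For $I(T_{j-1}, T_j)$, the starting point is the representation \eqref{eq:Matrix_3j_int}, and the reduction has three stages. First I would expand each Gegenbauer factor $C_{\diff_j^i}^{(t_{j-1}^i+\nu_j)}$ into Legendre polynomials $P_{\diff_j^i - 2\ell_i} = C_{\diff_j^i - 2\ell_i}^{(1/2)}$ through the Gegenbauer connection formula $C_n^{(\lambda)} = \sum_k \frac{n-2k+\mu}{\mu}\frac{(\lambda-\mu)_k(\lambda)_{n-k}}{k!\,(\mu+1)_{n-k}}C_{n-2k}^{(\mu)}$ specialized to $\mu = \tfrac12$; a direct comparison of Pochhammer factors shows the coefficient is precisely $V(\diff_j^i, t_{j-1}^i+\nu_j, \ell_i)$, with the prefactor $1 + 2\diff_j^i - 4\ell_i = 2(\diff_j^i - 2\ell_i)+1$ arising from $(n-2k+\mu)/\mu$ at $\mu=\tfrac12$. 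Second, I would linearize the Legendre product twice via Adams' formula $P_aP_b = \sum_c (2c+1)\mat{a}{b}{c}{0}{0}{0}^2 P_c$: coupling $\diff_j^2 - 2\ell_2$ and $\diff_j^3 - 2\ell_3$ through the intermediate index $\tau_1$, then coupling the result with $\diff_j^1 - 2\ell_1$ through $\tau_2$, which produces the two squared $3j$-symbols and the weights $(2\tau_1+1)(2\tau_2+1)$. Third, there remains the single integral $\int_{-1}^1 (1-z^2)^{\frac{s_{j-1}}{2}+\nu_j-\frac12}P_{\tau_2}(z)\,dz$, which vanishes for odd $\tau_2$ by parity (hence the restriction to even $\tau_2$) and evaluates by a standard Beta-function computation to $L(s_{j-1},\nu_j,\tau_2)$; as a sanity check, the $\tau_2 = 0$ term collapses to $B\!\left(\tfrac12,\tfrac{s_{j-1}}{2}+\nu_j+\tfrac12\right)$, which agrees with $L$.

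I expect the main obstacle to be the precise bookkeeping in the Gegenbauer stage: confirming that the classical connection and linearization coefficients combine into exactly the Pochhammer and Gamma expressions $V$ and $L$, and checking that the summation ranges $0\le\ell_i\le\lfloor\diff_j^i/2\rfloor$ together with the triangle and parity constraints encoded by the $3j$-symbols are mutually consistent. The evaluation of the weighted single-Legendre integral $L$ is the one step that cannot lean on orthogonality, so I would treat it as the technical heart of the argument.
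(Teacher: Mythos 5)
Your proposal matches the paper's proof essentially step for step: the same case analysis on the sign of $q_1$ with the conjugation identity and the Gaunt/Wigner formula (NIST Eq.~34.3.22) for $I(T_1, T_2)$, and the same three-stage reduction for $I(T_{j-1}, T_j)$ — the Gegenbauer-to-Legendre connection formula (NIST Eq.~18.18.16 at $\mu = \tfrac12$, yielding exactly $V$), the double application of the Legendre linearization through $\tau_1$ then $\tau_2$ (NIST Eq.~34.3.19), the parity argument restricting to even $\tau_2$, and the closed-form evaluation of the weighted single-Legendre integral (the paper cites Gradshteyn Eq.~7.132.1, which is the Beta-function computation you describe, and your $\tau_2 = 0$ check is consistent with it). The proposal is correct and takes the same route as the paper.
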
 
\begin{proof} 
Write $d\sigma_2 = \sin\theta_2\, d\theta_2\, d\phi$ with $0\le \phi < 2\pi$ and $0\le \theta_2\le \pi$. Recall that the product of three complex three-dimensional spherical harmonics can be written in terms of the Wigner $3j$-symbol by \cite[Eq.~34.3.22]{NIST}: 
\begin{equation*} 
\int_{S^2(\phi, \theta_2)} \widetilde Y_{a_2}^{a_1}\widetilde Y_{b_2}^{b_1}\widetilde Y_{c_2}^{c_1}\, d\sigma_2 = \sqrt{\frac{(2a_2 + 1)(2b_2 + 1)(2c_2 + 1)}{4\pi}}\mat{a_2}{b_2}{c_2}{0}{0}{0}\mat{a_2}{b_2}{c_2}{a_1}{b_1}{c_1}. 
\end{equation*} 
Now, using the complex conjugate formula for the normalised three-dimensional complex spherical harmonics, we have that 
\begin{align*} 
I(T_1, T_2) = (-1)^{n_1}\int_{S^2(\phi, \theta_2)} \widetilde Y_{q_2, q_1}\widetilde Y_{m_2}^{m_1}\widetilde Y_{n_2}^{-n_1}\, d\sigma_2. 
\end{align*} 
If $q_1 = 0$, then 
\begin{align*} 
I(T_1, T_2) & = (-1)^{n_1}\int_{S^2} \widetilde Y_{q_2}^0\widetilde Y_{m_2}^{m_1}\widetilde Y_{n_2}^{-n_1}\, d\sigma_2 \\ 
& = (-1)^{n_1}c_{T_2}\mat{q_2}{m_2}{n_2}{0}{0}{0}\mat{q_2}{m_2}{n_2}{0}{m_1}{-n_1} \\ 
& = (-1)^{m_1}\delta_{m_1, n_1}c_{T_2}\mat{q_2}{m_2}{n_2}{0}{0}{0}\mat{q_2}{m_2}{n_2}{0}{m_1}{-m_1}. 
\end{align*} 
If $q_1 > 0$, then 
\begin{align*} 
I(T_1, T_2) & = \frac{(-1)^{n_1}}{\sqrt{2}}\int_{S^2} \Big[\widetilde Y_{q_2}^{-q_1}\widetilde Y_{m_2}^{m_1}\widetilde Y_{n_2}^{-n_1} + (-1)^{q_1}\widetilde Y_{q_2}^{q_1}\widetilde Y_{m_2}^{m_1}\widetilde Y_{n_2}^{-n_1}\Big]\, d\sigma_2 \\ 
& = \frac{(-1)^{n_1}}{\sqrt{2}}c_{T_2}\mat{q_2}{m_2}{n_2}{0}{0}{0}\left[\mat{q_2}{m_2}{n_2}{-q_1}{m_1}{-n_1} + (-1)^{q_1}\mat{q_2}{m_2}{n_2}{q_1}{m_1}{-n_1}\right]. 
\end{align*} 
If $q_1 < 0$, then 
\begin{align*} 
I(T_1, T_2) & = \frac{i(-1)^{n_1}}{\sqrt{2}}\int_{S^2} \Big[\widetilde Y_{q_2}^{q_1}\widetilde Y_{m_2}^{m_1}\widetilde Y_{n_2}^{-n_1} - (-1)^{q_1}\widetilde Y_{q_2}^{-q_1}\widetilde Y_{m_2}^{m_1}\widetilde Y_{n_2}^{-n_1}\Big]\, d\sigma_2 \\ 
& = \frac{i(-1)^{n_1}}{\sqrt{2}}c_{T_2}\mat{q_2}{m_2}{n_2}{0}{0}{0}\left[\mat{q_2}{m_2}{n_2}{q_1}{m_1}{-n_1} - (-1)^{q_1}\mat{q_2}{m_2}{n_2}{-q_1}{m_1}{-n_1}\right]. 
\end{align*} 
This completes the proof for $I(T_1, T_2)$. 

To establish the formula for $I(T_{j - 1}, T_j)$, we need only show the integral in \eqref{eq:Matrix_3j_int} is equal to $H(T_{j - 1}, T_j)$. We follow the proof in \cite[Section V]{wen1985}. Let $P_\beta(z)$ and $(\alpha)_\beta = \Gamma(\alpha + \beta)/\Gamma(\alpha)$ denote the Legendre polynomial of degree $\beta$ and the Pochhammer's symbol, respectively. The first step is to combine the connection sum formula \cite[Eq.~18.18.16]{NIST} for Gegenbauer polnomials with $\lambda = 1/2$ together with the fact that $C_\beta^{(1/2)}(z) = P_\beta(z)$ \cite[Eq.~18.7.8]{NIST}: 
\begin{equation*} 
C_\beta^{(\alpha)}(z) = \sum_{\ell = 0}^{\lfloor \beta/2\rfloor} V(\beta, \alpha, \ell)P_{\beta - 2\ell}(z), \ \ \tn{ where } V(\beta, \alpha, \ell)\coloneqq (1 + 2\beta - 4\ell)\frac{(\alpha)_{\beta - \ell}}{(\frac{3}{2})_{\beta - \ell}}\frac{(\alpha - \frac{1}{2})_\ell}{\ell!}. 
\end{equation*}
Fix $j\in\{3, 4, \dots, d\}$ and choose $\beta_j^i = \diff_j^i$ and $\alpha_j^i = t_{j - 1}^i + \nu_j$. The integral in \eqref{eq:Matrix_3j_int} is equal to 
\begin{gather} \label{eq:Matrix_3j1}
\begin{aligned} 
\int_{-1}^1 & (1 - z^2)^{\frac{s_{j - 1}}{2} + \nu_j - \frac{1}{2}}\left(\prod_{i = 1}^3 \sum_{\ell_i = 0}^{\lfloor \beta_j^i/2\rfloor} V(\beta_j^i, \alpha_j^i, \ell_i)P_{\beta_j^i - 2\ell_i}(z)\right) dz \\ 
& = \sum_{\substack{0\le \ell_1\le \lfloor \beta_j^1/2\rfloor \\ 0\le \ell_2\le \lfloor \beta_j^2/2\rfloor \\ 0\le \ell_3\le \lfloor \beta_j^3/2\rfloor}} \prod_{i = 1}^3 V(\beta_j^i, \alpha_j^i, \ell_i)\int_{-1}^1 (1 - z^2)^{\frac{s_{j - 1}}{2} + \nu_j - \frac{1}{2}} \prod_{i = 1} P_{\beta_j^i - 2\ell_i}(z)\, dz. 
\end{aligned} 
\end{gather} 

To evaluate the integral involving the triple product of Legendre polynomials, we use the fact that the product of Legendre polynomials can be written in terms of the Wigner $3j$-symbol by \cite[Eq.~34.3.19]{NIST} 
\begin{align*} 
P_{b_1}(z)P_{b_2}(z) = \sum_{\tau_1\in\N} (2\tau_1 + 1)\mat{b_1}{b_2}{\tau_1}{0}{0}{0}^2P_{\tau_1}(z). 
\end{align*} 
Applying this identity twice and using the fact that odd permutations of columns of Wigner $3j$-symbols produce a phase factor \cite[Eq.~34.3.9]{NIST}, we obtain 
\begin{gather} \label{eq:Matrix_3j2} 
\begin{aligned} 
& \prod_{i = 1}^3 P_{\beta_j^i - 2\ell_i}(z) = P_{\beta_j^1 - 2\ell_1}(z) \sum_{\tau_1\in\N} (2\tau_1 + 1)\mat{\beta_j^2 - 2\ell_2}{\beta_j^3 - 2\ell_3}{\tau_1}{0}{0}{0}^2 P_{\tau_1}(z) \\ 
& = \sum_{\tau_1, \tau_2\in\N} (2\tau_1 + 1)(2\tau_2 + 1)\mat{\beta_j^2 - 2\ell_2}{\beta_j^3 - 2\ell_3}{\tau_1}{0}{0}{0}^2\mat{\beta_j^1 - 2\ell_1}{\tau_1}{\tau_2}{0}{0}{0}^2 P_{\tau_2}(z). 
\end{aligned} 
\end{gather} 
Substituting \eqref{eq:Matrix_3j2} into \eqref{eq:Matrix_3j1} and comparing the resulting expression with the given expression for $H(T_{j - 1}, T_j)$, we need only show  
\begin{align*} 
\int_{-1}^1 (1 - z^2)^{\frac{s_{j - 1}}{2} + \nu_j - \frac{1}{2}}P_{\tau_2}(z)\, dz = L(s_{j - 1}, \nu_j, \tau_2). 
\end{align*} 
This follows from applying the following integration formula for Legendre polynomials with $\gamma = (s_{j - 1} + 2\nu_j + 1)/2$ \cite[Eq.~7.132.1 with $\mu = 0$]{Gradshteyn}:  
\begin{align*} 
\int_{-1}^1 (1 - z^2)^{\gamma - 1}P_{\tau_2}(z)\, dz = \frac{\pi\Gamma^2(\gamma)}{\Gamma\left(\gamma + \frac{\tau_2}{2} + \frac{1}{2}\right)\Gamma\left(\gamma - \frac{\tau_2}{2}\right)\Gamma\left(\frac{\tau_2}{2} + 1\right)\Gamma\left(-\frac{\tau_2}{2} + \frac{1}{2}\right)}, \ \ \mathrm{Re}(\gamma) > 0. 
\end{align*} 
Finally, we may assume that $\tau_2$ is even, since otherwise $(1 - z^2)^{(s_{j - 1} + 2\nu_j - 1)/2}P_{\tau_2}(z)$ is an odd function which results in $L(s_{j - 1}, \nu_j, \tau_2) = 0$. 
\end{proof} 

We now combine Lemma \ref{thm:Matrix_triple} and Theorem \ref{thm:Matrix_3j} to prove Theorem \ref{thm:Iso_notball}. 

\begin{proof}[Proof of Theorem \ref{thm:Iso_notball}] 
Choose the perturbation function $\rho = Y_{2, q}$ with the tuple $q = (0, 2, \dots, 2)$. With this choice of $\rho$, Lemma \ref{thm:Matrix_trace} tells us that the trace of $M^{(d, k)}$ is 0. Since $M^{(d, k)}$ is Hermitian which is diagonalisable, it suffices to show that $M^{(d, k)}$ has a nonzero entry. In that case, we must have $\lambda_{k, N(d, k)}^{(1)} = \lambda_{N_{d, k + 1}}^{(1)} > 0$ since we defined it to be the largest eigenvalue of $M^{(d, k)}$. 

We claim that the diagonal entry of $M^{(d, k)}$ corresponding to the tuple $m = (k, k, \dots, k)$ is one such nonzero entry. From Lemma \ref{thm:Matrix_triple}, we need only show that $W^{2, k}_{q, m, m}\neq 0$ with our choice of tuples. Recall the definition of the 3-tuple $T_j = (q_j, m_j, n_j)$ for $j = 1, 2, \dots, d$. From Theorem \ref{thm:Matrix_3j}, we have $c_{T_2} = \sqrt{\frac{5}{4\pi}}(2k + 1) \neq 0$ and it follows from \cite[Eqs.~34.3.5 \& 34.3.7]{NIST} that 
\begin{align*} 
I(T_1, T_2) & = (-1)^k c_{T_2}\mat{2}{k}{k}{0}{0}{0}\mat{2}{k}{k}{0}{k}{-k} \neq 0. 
\end{align*} 
Next, observe that for each $j = 3, 4, \dots, d$, we have 
\begin{equation*} 
\diff_j^i = t_j^i - t_{j - 1}^i = 0, \ \ i = 1, 2, 3, \ \ \tn{ and } \ \ s_{j - 1} = q_{j - 1} + m_{j - 1} + n_{j - 1} = 2 + 2k. 
\end{equation*} 
For a fixed $j\in\{3, 4, \dots, d\}$, Theorem \ref{thm:Matrix_3j} gives 
\begin{align*} 
I(T_{j - 1}, T_j) & = \frac{V(0, 2 + \nu_j, 0)V(0, k + \nu_j, 0)^2}{\mu_j^{(1)}\mu_j^{(2)}\mu_j^{(3)}}\sum_{\substack{\tau_1, \tau_2\in\N \\\ \tau_2 \tn{ even}}} (2\tau_1 + 1)(2\tau_2 + 1)L(2 + 2k, \nu_j, \tau_2) \\ 
& \hspace{5cm} \times \mat{0}{0}{\tau_1}{0}{0}{0}^2\mat{0}{\tau_1}{\tau_2}{0}{0}{0}^2. 
\end{align*} 
Since the Pochhammer's symbol satisfies $(\alpha)_0 =1$ for any $\alpha\ge 0$, we see that $V(0, 2 + \nu_j, 0)$ and $V(0, k + \nu_j, 0)$ are both 1. Next, using the triangle conditions for Wigner $3j$-symbols, we must have $\tau_1 = 0$ and subsequently $\tau_2 = 0$. Recalling $\nu_j = (j - 1)/2$ and using the fact that $\mat{0}{0}{0}{0}{0}{0} = 1$ (see \cite[Eq.~34.3.1]{NIST}), we find 
\begin{align*} 
I(T_{j - 1}, T_j) & = \frac{L(2 + 2k, \nu_j, 0)}{\mu_j^{(1)}\mu_j^{(2)}\mu_j^{(3)}}\mat{0}{0}{0}{0}{0}{0}^4 = \frac{\sqrt{\pi}\Gamma\left(k + 1 + \frac{j}{2}\right)}{\mu_j^{(1)}\mu_j^{(2)}\mu_j^{(3)}\Gamma\left(k + \frac{j + 3}{2}\right)} > 0.  
\end{align*} 
This completes the proof since $W^{2, k}_{q, m, m}$ is a product of $I(T_1, T_2)$ and $\{I(T_{j - 1}, T_j)\}_{j = 3}^d$. 
\end{proof}

Following \cite[Corollary 2.3]{viator2022}, we wish to establish the first-order behaviour of the Steklov eigenvalues for a nearly hyperspherical domain $\Omega_\vareps$ of the form \eqref{eq:Domain} with $\rho = Y_{p, q}(\hat\theta)$, for both $p\in\N$ and the $(d - 1)$-tuple $q$ fixed. The proof in \cite{viator2022} is based on a delicate analysis of the integral $W^{p, k}_{q, m, n}$ using the selection rules for Wigner $3j$-symbols, where the authors proved that the entry \eqref{eq:Matrix_triple} for $M^{(d, k)}$ involving the infinite sum reduced to a finite sum over the set $\{p\le 2k, p\tn{ even}\}$.

\begin{lemma} \label{thm:Matrix_triple_finite} 
Given $d\ge 3$ and $k\in\Z^+$, let $M^{(d, k)}$ and $\rho$ be the Hermitian matrix and the perturbation function defined in Theorem \ref{thm:Oeps_matrix} and \eqref{eq:Domain}, respectively. Then the entries of $M^{(d, k)}$ can be written as 
\begin{equation} 
M_{m ,n}^{(d, k)} = -\frac{1}{2}\sum_{\substack{p = 0 \\ p \tn{ even}}}^\infty\sum_{q = 1}^{N(d, k)} A_{p, q}\Big(p(p + d - 1) + 2k\Big)W_{q, m, n}^{p, k}. 
\end{equation} 
\end{lemma}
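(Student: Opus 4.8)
The plan is to show that every odd-$p$ term in the expansion of $M_{m,n}^{(d,k)}$ from Lemma \ref{thm:Matrix_triple} vanishes, so that only even $p$ survive. Since the coefficients $A_{p,q}$ and the scalar factor $p(p + d - 1) + 2k$ do not affect whether a term is zero, it suffices to prove that the triple-product integral
\[
W_{q,m,n}^{p,k} = \int_{S^d} Y_{p,q}(\hat\theta)\, Y_k^m(\hat\theta)\,\cc{Y_k^n(\hat\theta)}\, d\sigma_d
\]
vanishes whenever $p$ is odd.

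First I would exploit the parity of hyperspherical harmonics under the antipodal map $\vec{u}\mapsto -\vec{u}$ on $S^d$. Each hyperspherical harmonic of degree $\ell$ is the restriction to $S^d$ of a harmonic polynomial on $\R^{d + 1}$ that is homogeneous of degree $\ell$; hence $Y_\ell(-\hat\theta) = (-1)^\ell Y_\ell(\hat\theta)$, and the same identity holds for the real harmonics $Y_{\ell, m}$ since they are real linear combinations of the $Y_\ell^m$ of the same degree. Applying this to the three factors of the integrand (conjugation does not alter the sign, as $(-1)^k$ is real) produces the overall parity factor $(-1)^p\cdot(-1)^k\cdot(-1)^k = (-1)^p$ under $\hat\theta\mapsto -\hat\theta$.

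Next I would use that the antipodal map is an isometry of the round sphere $S^d$ and therefore preserves the surface measure $d\sigma_d$. Changing variables $\hat\theta\mapsto -\hat\theta$ in the integral defining $W_{q,m,n}^{p,k}$ leaves the domain and the measure unchanged while multiplying the integrand by $(-1)^p$, so $W_{q,m,n}^{p,k} = (-1)^p\, W_{q,m,n}^{p,k}$. For odd $p$ this forces $W_{q,m,n}^{p,k} = 0$, and substituting back into Lemma \ref{thm:Matrix_triple} restricts the sum to even $p$, as claimed.

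I do not expect a genuine obstacle here; the only points requiring care are confirming that the antipodal map preserves $d\sigma_d$ (immediate, as it is an orthogonal transformation of $\R^{d + 1}$) and that the real perturbation harmonics $Y_{p,q}$ inherit the degree-$p$ parity. As an alternative that stays within the paper's $3j$ framework, one can argue directly from the factorisation in Theorem \ref{thm:Matrix_3j}: selection rule (4) applied to $\mat{q_2}{m_2}{n_2}{0}{0}{0}$ forces $q_2 + m_2 + n_2$ to be even, while applied to the two Wigner symbols inside $H(T_{j - 1}, T_j)$ (using that only even $\tau_2$ contribute) it forces each increment $s_j - s_{j - 1}$ to be even, where $s_j = q_j + m_j + n_j$; chaining these parities up to $j = d$ yields $s_d = p + 2k$ even, hence $p$ even. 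The delicate part of this second route is tracking the parity constraints through all $d - 1$ factors, which the antipodal argument sidesteps entirely.
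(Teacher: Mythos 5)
Your primary (antipodal) argument is correct, and it is genuinely different from the paper's proof. The paper works entirely inside the Wigner $3j$ machinery of Theorem \ref{thm:Matrix_3j}: assuming $W^{p,k}_{q,m,n}\neq 0$, it applies selection rule (4) to $\mat{q_2}{m_2}{n_2}{0}{0}{0}$ to force $s_2 = q_2+m_2+n_2$ even, then inductively propagates this parity through the factors $I(T_{j-1},T_j)$ (using that $\tau_2$ is even and that $\diff_j^1+\tau_1$ must be even) until $j=d$, where $s_d = p+2k$ even forces $p$ even --- this is precisely the alternative route you sketched at the end, so you have correctly identified both proofs. Your parity argument is shorter and more robust: it needs only that each $Y_\ell^m$ (and hence each real $Y_{p,q}$, being a real linear combination of complex harmonics of the same degree) is the restriction of a degree-$\ell$ homogeneous harmonic polynomial, so that $Y_\ell^m(-\hat\theta)=(-1)^\ell Y_\ell^m(\hat\theta)$, together with the fact that the antipodal map is an orthogonal transformation preserving $S^d$ and $d\sigma_d$; the integrand of $W^{p,k}_{q,m,n}$ then picks up the factor $(-1)^{p+2k}=(-1)^p$, giving $W^{p,k}_{q,m,n}=(-1)^pW^{p,k}_{q,m,n}=0$ for odd $p$. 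This sidesteps the $(d-1)$-factor induction entirely and generalises at once to any triple of degrees with odd sum. What the paper's route buys in exchange is that it exercises the same selection-rule bookkeeping the authors rely on elsewhere (e.g., when probing numerically whether the sum further truncates to $p\le 2k$), so within the paper it is not wasted machinery; but as a proof of this lemma alone, your argument is cleaner and complete as stated.
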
 
\begin{proof} 
Fix $p\in\N$ and $k\in\Z^+$. We claim that if $p$ is odd, then $W^{p, k}_{q, m, n} = 0$ for all $q, m, n$ satisfying \eqref{eq:SH_tuple}. Recall that $s_j = q_j + m_j + n_j$ for $j = 1, 2, \dots, d$. Looking at the formula for $I(T_1, T_2)$ in Theorem \ref{thm:Matrix_3j}, we see that $W^{p, k}_{q, m, n} = 0$ if $\mat{q_2}{m_2}{n_2}{0}{0}{0}$ is zero. Thus, we may assume $\mat{q_2}{m_2}{n_2}{0}{0}{0} \neq 0$ without loss of generality. By the fourth selection rule for Wigner $3j$-symbols, we must have $s_2 = q_2 + m_2 + n_2$ even. 

Next, we turn our attention to $I(T_2, T_3)$. Recall that $\diff_j^i = t_j^i - t_{j - 1}^i$ and $\tau_2$ is even. The Wigner $3j$ symbols appearing in $I(T_2, T_3)$ have the form 
\begin{equation*} 
\mat{\diff_3^2 - 2\ell_2}{\diff_3^3 - 2\ell_3}{\tau_1}{0}{0}{0}^2\mat{\diff_1^1 - 2\ell_1}{\tau_1}{\tau_2}{0}{0}{0}^2. 
\end{equation*} 

Looking at the Wigner $3j$-symbols from $I(T_2, T_3)$, the fourth selection rule tells us that we may assume both $\diff_3^1 + \tau_1$ and $(\diff_3^2 - 2\ell_2 + \diff_3^3 - 2\ell_3 + \tau_1)$ are even. In this case, 
\begin{align*} 
\diff_3^1 + \tau_1 & = q_3 - q_2 + \tau_1 \\ 
\diff_3^2 - 2\ell_2 + \diff_3^3 - 2\ell_3 + \tau_1 & = m_3 - m_2 - 2\ell_2 + n_3 - n_2 - 2\ell_3 + \tau_1 \\ 
& = (s_3 - q_3) - (s_2 - q_2) + \tau_1 - 2\ell_2 - 2\ell_3 \\ 
& = (s_3 - s_2) - (\diff_3^1 - \tau_1) - 2\ell_2 - 2\ell_3.
\end{align*} 
Since $\diff_3^1 - \tau_1 = \diff_3^1 + \tau_1 - 2\tau_1$ and $s_2$ are both even, we may conclude that $s_3$ is even as well. 

Continuing inductively, we conclude that $\diff_j^1 + \tau_1$ and $s_j$ is even (so that $I(T_{j - 1}, T_j)$ is nonzero) for each $j = 3, 4, \dots, d - 1$.  We now consider $I(T_{d-1}, T_d)$.  By the arguments above, we can assume that $\diff_d^1 + \tau_1$ is even, which in turn implies that $\diff_d^1 - \tau_1$ is also even. Thus, we compute 
\begin{align*} 
\diff_d^2 - 2\ell_{d-1} + \diff_d^3 - 2\ell_d + \tau_1 & = m_d - m_{d-1} - 2\ell_{d-1} + n_d - n_{d-1} - 2\ell_d + \tau_1 \\
& = (s_d - q_d) - (s_{d-1} - q_{d-1}) + \tau_1 - 2\ell_{d-1} - 2\ell_d \\
& = (s_d - s_{d-1}) - (\diff_d^1 - \tau_1) - 2\ell_{d-1} - 2\ell_d \\
& = p + 2k - s_{d-1} - (\diff_d^1 - \tau_1) - 2\ell_{d-1} - 2\ell_d ,
\end{align*}
where we have used that that $(t_d^1 , t_d^2 , t_d^3) = (q_d, m_d, n_d) = (p, k ,k)$.  But if $p$ is odd, then $\diff_d^2 - 2\ell_{d-1} + \diff_d^3 - 2\ell_d + \tau_1$ must be odd as well.  By the fourth selection rule, we conclude that $I(T_{d-1},T_d)=0$, so that $W^{p,k}_{q,m,n} = 0$ as well, completing the proof.
\end{proof} 

The following theorem is immediate from Corollary \ref{thm:Matrix_sum} and Lemma \ref{thm:Matrix_triple_finite}. 

\begin{theorem} \label{thm:Ypq} 
Given $d\ge 3$, consider a nearly hyperspherical domain $\Omega_\vareps$ of the form \eqref{eq:Domain} with $A_{p, q} = \delta_{p, p'}\delta_{q, q'}$. For any positive integer $k\in\Z^+$ and any $j = 1, 2, \dots, N(d, k)$, 
\begin{enumerate} 
\item if $p' = 0$ and $q' = (0, 0, \dots, 0)$ is the trivial $(d - 1)$-tuple, then $\lambda_{k, j}^{(1)} = -k\abs{S^d}^{-1/2}$. 
\item if $p'$ is odd, then the Steklov eigenvalue $\lambda_{k, j}^\vareps$ is unperturbed at first-order in $\vareps$. 
\end{enumerate} 
\end{theorem}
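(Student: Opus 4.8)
The plan is to specialise the two structural descriptions of $M^{(d, k)}$ already in hand---Corollary \ref{thm:Matrix_sum} and Lemma \ref{thm:Matrix_triple_finite}---to the single-harmonic perturbation $\rho = Y_{p', q'}$ (equivalently $A_{p, q} = \delta_{p, p'}\delta_{q, q'}$), and then read off the first-order perturbations $\lambda_{k, j}^{(1)}$ directly as the eigenvalues of the resulting matrix. The whole argument is a bookkeeping specialisation; the substantive analytical content (the parity selection-rule computation) has already been carried out in Lemma \ref{thm:Matrix_triple_finite}.

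For part (1), I would observe that $p' = 0$ together with the trivial tuple $q' = (0, \dots, 0)$ forces $\rho = Y_{0, 1} = \abs{S^d}^{-1/2}$, so that $A_{0, 1} = 1$ while $A_{p, q} = 0$ for every $p\ge 1$. Feeding this into Corollary \ref{thm:Matrix_sum}, the matrix $E^{(d, k)}$---whose entries are assembled solely from the coefficients $A_{p, q}$ with $p\ge 1$---vanishes identically. Hence
\begin{equation*}
M^{(d, k)} = -\frac{k}{\abs{S^d}^{1/2}}\, I_{N(d, k)},
\end{equation*}
every eigenvalue of which equals $-k\abs{S^d}^{-1/2}$; this is exactly $\lambda_{k, j}^{(1)}$ for all $j = 1, \dots, N(d, k)$.

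For part (2), I would invoke Lemma \ref{thm:Matrix_triple_finite}, which rewrites each entry of $M^{(d, k)}$ as a sum ranging over \emph{even} $p$ only. With $\rho = Y_{p', q'}$ the coefficients are $A_{p, q} = \delta_{p, p'}\delta_{q, q'}$, so the only term that could survive requires $p = p'$; but $p'$ is odd and is therefore absent from the even-$p$ sum. Consequently every entry $M_{m, n}^{(d, k)} = 0$, so $M^{(d, k)}$ is the zero matrix, all its eigenvalues vanish, and $\lambda_{k, j}^{(1)} = 0$ for each $j$. Since the $O(\vareps)$ coefficient of $\lambda_{k, j}^\vareps$ in the ansatz \eqref{eq:Pert_ansatz1} is precisely this eigenvalue of $M^{(d, k)}$, the eigenvalue is unperturbed at first order, as claimed.

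Because both parts reduce to a one-line substitution into a previously established identity, there is no genuine analytical obstacle remaining at this stage. The only points demanding a little care are matching the index conventions---recalling that $m = 1$ labels the trivial tuple, so the constant harmonic is indeed $Y_{0, 1} = \abs{S^d}^{-1/2}$ with $A_{0, 1} = 1$---and confirming that the even-$p$ restriction in Lemma \ref{thm:Matrix_triple_finite} is exactly the mechanism that annihilates the odd-$p'$ contribution in part (2).
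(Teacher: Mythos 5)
Your proposal is correct and follows exactly the route the paper intends: the paper states that Theorem \ref{thm:Ypq} is immediate from Corollary \ref{thm:Matrix_sum} (which gives part (1) once $E^{(d,k)}$ vanishes for the constant-harmonic perturbation) and Lemma \ref{thm:Matrix_triple_finite} (whose even-$p$ restriction kills every term when $p'$ is odd, giving part (2)). Your write-up simply makes these two substitutions explicit, so there is nothing to add.
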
 

Reducing the infinite sum \eqref{eq:Matrix_triple} to the case where $p\le 2k$ is intractable for $d\ge 3$. More specifically, we compute the entry of the matrix $M^{(d, k)}$ using \eqref{eq:Matrix_triple} for $p = 2k + 2\xi$ with $\xi\in\Z^+$, $q = (0, 2k, \dots, 2k)$, $m = n = (k, k, \dots, k)$, and we see numerically that this particular entry is zero due to cancellations of Wigner $3j$-symbols appearing in the formula of $W^{p, k}_{q, m, n}$ (see Theorem \ref{thm:Matrix_3j}).

\subsubsection*{Acknowledgements.} We would like to express our gratitude to Chiu-Yen Kao and Nathan Schroeder for sharing their helpful insights into the Steklov eigenvalue problem in arbitrary dimensions. We would also like to thank Yerim Kone, Lucas Alland, and Amy Liu for their work in 2D Steklov shape perturbation, and Swarthmore College for funding their summer work.

\printbibliography 

\end{document}